\documentclass[a4paper,12pt,leqno]{article}
\usepackage[utf8]{inputenc}

\usepackage[top=3cm, left=2.5cm, right=2.5cm, bottom=3cm]{geometry}

\usepackage{hyperref}
\hypersetup{
    colorlinks=true,
    linkcolor=dodger,
    filecolor=dodger,
    urlcolor=dodger,
    citecolor=dodger,
}

\usepackage{amsmath,mathtools}
\usepackage{amsthm, pb-diagram}
\usepackage{amssymb,comment}
\usepackage{amsfonts,graphicx,color}
\usepackage{enumitem, fancyhdr, dsfont}
\usepackage[normalem]{ulem}
\usepackage{fontawesome}
\usepackage{thmtools,cleveref}
\usepackage{stmaryrd}

\usepackage{lineno,multicol}


\usepackage{tikz,float}

\overfullrule=5pt

\setlength{\parindent}{0mm}
\setlength{\parskip}{2mm}

\setlist{topsep=0ex}

\makeatletter
\def\namedlabel#1#2{\begingroup
    #2%
    \def\@currentlabel{#2}%
    \phantomsection\label{#1}\endgroup
}
\makeatother

\definecolor{sub0}{RGB}{29,32,137}
\definecolor{sub1}{RGB}{1,71,157}
\definecolor{sub2}{RGB}{1,104,183}
\definecolor{sub3}{RGB}{0,160,234}
\definecolor{sug}{RGB}{0,154,68}
\definecolor{suy}{RGB}{208,219,1}







    \newcommand{\Dwf}{\mathcal{D}}
    \newcommand{\Ewf}{\mathcal{E}}

    \newcommand{\Mwf}{\mathcal{M}}
    \newcommand{\Nwf}{\mathcal{N}}

    
    \newcommand{\bfrak}{\mathfrak{b}}
    \newcommand{\cfrak}{\mathfrak{c}}
    \newcommand{\dfrak}{\mathfrak{d}}


    \newcommand{\Pbf}{\mathbf{P}}

    
    \newcommand{\Bor}{\mathbb{B}}

    \newcommand{\Ebb}{\mathbb{E}}

    \newcommand{\Por}{\mathbb{P}}

    \newcommand{\menos}{\smallsetminus}
    
    \DeclareMathOperator{\pts}{\mathcal{P}}

    \newcommand{\Q}{\mathbb{Q}}
    \newcommand{\R}{\mathbb{R}}





    \newcommand{\la}{\langle}
    \newcommand{\ra}{\rangle}

    \DeclareMathOperator{\add}{\mathrm{add}}
    \DeclareMathOperator{\non}{\mbox{\rm non}}
    \DeclareMathOperator{\cov}{\mbox{\rm cov}}
    \DeclareMathOperator{\cof}{\mbox{\rm cof}}

    \DeclareMathOperator{\Lev}{Lv}


    \newcommand{\seq}[2]{\la #1 \colon  #2\ra}
    \newcommand{\set}[2]{\{#1 \colon  #2\}}
    
    \newcommand{\lset}[2]{\left\{#1 \colon  #2\right\}}

    \newcommand{\id}{\mathrm{id}}

    \DeclareMathOperator{\stem}{\mathrm{st}}

    \DeclareMathOperator{\suc}{succ}
    \newcommand{\calY}{\mathcal{Y}}
    

\newcommand{\tbf}{\mathbf{t}}


    \definecolor{carrotorange}{rgb}{0.93, 0.57, 0.13}
    \definecolor{dodger}{rgb}{0.0,0.5,1.0}
    


    \newcommand{\varp}{\varepsilon}

    \newcommand{\Mcal}{\mathcal{M}}
    \newcommand{\Ncal}{\mathcal{N}}

    \DeclareMathOperator{\loss}{loss}

    \newcommand{\Etld}{\tilde{\Ebb}}


\title{Anatomy of $\tilde{\mathbb{E}}$}
\author{Diego A.~Mejía%
        \thanks{Email: \href{mailto:diego.mejia@shizuoka.ac.jp}{\texttt{diego.mejia@shizuoka.ac.jp}}
}}

\date{{\normalsize
Faculty of Science, Shizuoka University\\ Ohya 836, Suruga-ku, Shizuoka 422--8529, Japan}
}

\begin{document}

\makeatletter
\def\@roman#1{\romannumeral #1}
\makeatother

\newcounter{enuAlph}
\renewcommand{\theenuAlph}{\Alph{enuAlph}}


\theoremstyle{plain}
  \newtheorem{theorem}{Theorem}[section]
  \newtheorem{corollary}[theorem]{Corollary}
  \newtheorem{lemma}[theorem]{Lemma}
  \newtheorem{mainlemma}[theorem]{Main Lemma}
  \newtheorem{prop}[theorem]{Proposition}
  \newtheorem{clm}[theorem]{Claim}
  \newtheorem{fct}[theorem]{Fact}
  \newtheorem{fact}[theorem]{Fact}
  \newtheorem{question}[theorem]{Question}
  \newtheorem{problem}[theorem]{Problem}
  \newtheorem{conjecture}[theorem]{Conjecture}
  \newtheorem*{thm}{Theorem}
  \newtheorem{teorema}[enuAlph]{Theorem}
  \newtheorem*{corolario}{Corollary}
  \newtheorem*{scnmsc}{(SCNMSC)}
\theoremstyle{definition}
  \newtheorem{definition}[theorem]{Definition}
  \newtheorem{example}[theorem]{Example}
  \newtheorem{remark}[theorem]{Remark}
  \newtheorem{notation}[theorem]{Notation}
  \newtheorem{context}[theorem]{Context}
  \newtheorem{exer}[theorem]{Exercise}
  \newtheorem{exerstar}[theorem]{Exercise*}
  \newtheorem{assumption}[theorem]{Assumption}

  \newtheorem*{defi}{Definition}
  \newtheorem*{acknowledgements}{Acknowledgements}
  
\numberwithin{equation}{theorem}

\def\sectionautorefname{Section}
\def\subsectionautorefname{Subsection}

\maketitle

\begin{abstract}
We present a detailed general framework to describe the forcing $\tilde{\mathbb{E}}$, defined by Kellner, Shelah and Tan\u asie to prove the consistency with ZFC of an alternative order of Cicho\'n's maximum. Our presentation is close to the framework of tree-creature forcing notions from Horowitz and Shelah. We show that the posets in this class have strong FAM limits for intervals (in recent terminology, they are $\sigma$-FAM-linked) and, furthermore, that they also have strong ultrafilter limits for intervals.
\end{abstract}

\section{Introduction}

\begin{figure}[ht]
\begin{center}
  \includegraphics[scale=1.3]{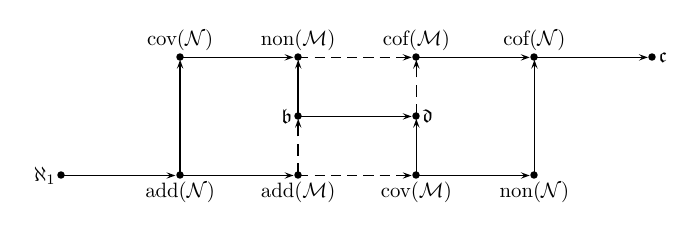}
  \caption{Cicho\'n's diagram. The arrows mean $\leq$ and dotted arrows represent
  $\add(\Mwf)=\min\{\bfrak,\cov(\Mwf)\}$ and $\cof(\Mwf)=\max\{\dfrak,\non(\Mwf)\}$, which we call the \emph{dependent cardinals}.}
  \label{FigCichon}
\end{center}
\end{figure}

\emph{Cicho\'n's maximum} refers to the situation when all non-dependent cardinals in Cicho\'n's diagram (see \autoref{FigCichon}) are pairwise different. 
The first proof of the consistency of Cicho\'n's maximum with ZFC is due to Goldstern, Kellner and Shelah~\cite{GKS}. They first refined a ccc poset from~\cite{GMS}, constructed via a FS (finite support) iteration, to force the non-dependent cardinals on the left side of the diagram pairwise different (i.e.\ with $\non(\Mwf)<\cov(\Mwf)=\cfrak$), and applied \emph{Boolean ultrapowers} to this poset to force, in addition, that the non-dependent cardinals on the right side can also be separated. To force $\bfrak<\non(\Mwf)<\cov(\Mwf)$ with the first ccc poset, we introduced in~\cite{GMS} the notion of \emph{ultrafilter limits for posets} to show that some restrictions of the forcing $\Ebb$, the standard ccc poset that adds an eventually different real, do not add dominating reals along the iteration.

Although there are many possible instances of Cicho\'n's maximum, only four are possible to be forced using FS iterations of ccc posets. The reason is that such iterations add Cohen reals at limits steps, which make them force $\non(\Mwf)\leq \cov(\Mwf)$. Under this restriction, Cicho\'n's diagram gets reduced as illustrated in \autoref{FScichon}, which can only be extended to four different linear orders. The constellation proved consistent in \cite{GMS,GKS} satisfies $\cov(\Nwf)<\bfrak$ and $\dfrak<\non(\Nwf)$.

\begin{figure}[ht]
\centering
\begin{tikzpicture}[scale=0.9, transform shape]
\small{
 \node (aleph1) at (-2,2.5) {$\aleph_1$};
 \node (addn) at (0,2.5){$\add(\Nwf)$};
 \node (covn) at (2,5){$\cov(\Nwf)$};
 \node (b) at (2,0) {$\bfrak$};
 \node (nonm) at (4,2.5) {$\non(\Mcal)$} ;
 \node (d) at (8,5) {$\dfrak$};
 \node (covm) at (6,2.5) {$\cov(\Mcal)$} ;
 \node (nonn) at (8,0) {$\non(\Ncal)$} ;
 \node (cfn) at (10,2.5) {$\cof(\Ncal)$} ;
  \node (c) at (12,2.5) {$\cfrak$};

\foreach \from/\to in {
aleph1/addn, addn/covn, addn/b, covn/nonm, b/nonm, covm/nonn, covm/d, nonn/cfn, d/cfn, cfn/c}
{
\path[-,draw=white,line width=3pt] (\from) edge (\to);
\path[->,] (\from) edge (\to);
}
\path[->,draw=red,line width=1pt] (nonm) edge (covm);

%
%
%

}
\end{tikzpicture}
\caption{Cicho\'n's diagram after a FS iteration of (non-trivial) ccc posets with length of uncountable cofinality.}\label{FScichon}
\end{figure}
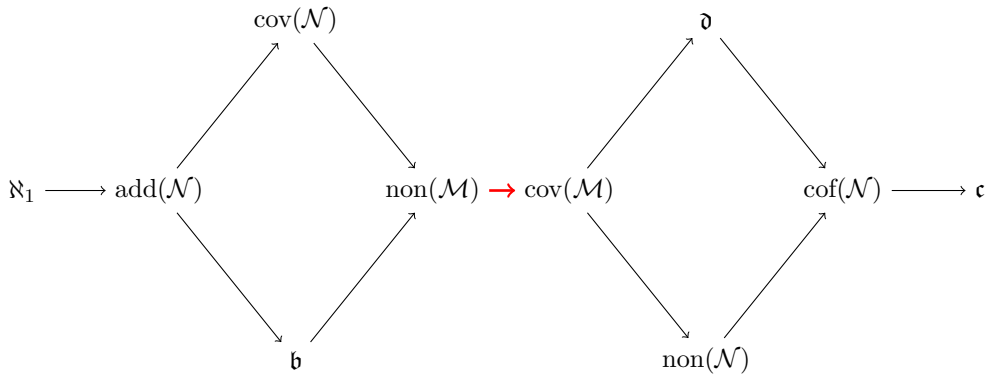

The other possible constellation for the left side of Cicho\'n's diagram in the context of FS iterations is the one where $\bfrak<\cov(\Nwf)$. The challenge to force this constellation is to avoid adding dominating reals when forcing with restrictions of random forcing and $\Ebb$ (to increase $\cov(\Nwf)$ and $\non(\Mwf)$, respectively). From~\cite{GMS} we know that we can use the method of ultrafilter limits for $\Ebb$, but it is unknown (and very unlikely) whether random forcing has ultrafilter limits. On the other hand, the method of fam-limits (fam stands for \emph{finitely additive measure}) from Shelah~\cite{ShCov} (which preceeds ultrafilter limits) works for random forcing, but not for $\Ebb$ (see \autoref{exm:famlk}~\ref{famlkE}). So, one way to solve this issue is to either find a forcing with fam-limits increasing $\non(\Mwf)$, or a forcing with ultrafilter limits adding random reals\footnote{Currently, forcings with ultrafilter-limits can only be iterated when they are $\mu$-centered for small $\mu$, but there is no way a forcing adding random reals can have this property~\cite{Br}.}

Kellner, Shelah and Tan\u asie~\cite{KST} found a forcing with fam-limits that adds eventually different reals (i.e.\ increasing $\non(\Mwf)$), which they denoted by $\Etld$ (\cite[Def.~1.14]{KST}). This is a modification of a tree-creature ccc forcing introduced by Horowitz and Shelah~\cite{HSh}. In this way, they succeeded in forcing, with a ccc FS iteration, the separation of the cardinals of the left side of Cicho\'n's diagram with $\bfrak<\cov(\Nwf)$, which can be used to force Cicho\'n's maximum with $\bfrak<\cov(\Nwf)$ and $\non(\Nwf)<\dfrak$ after Boolean ultrapowers. We remark that the method of Boolean ultrapowers uses large cardinals. On the other hand, we found another method to force the constellations of Cicho\'n's maximum from~\cite{GKS,KST} without using large cardinals~\cite{GKMS}. The consistency of the two remaining constellations of Cicho\'n's maximum compatible with \autoref{FScichon} are still unknown.

In this way, the forcing $\Etld$ plays an important role in forcing different values in Cicho\'n's diagram (and also simultaneously with other cardinal characteristics). However, $\Etld$ is a tree-forcing defined by declaring many parameters (\cite[Def.~1.12]{KST}) and using creature-type norms on the successors of the nodes, which makes it a little bit difficult to digest. Considering this, the proof that $\Etld$ has \emph{strong FAM limits for intervals} (in the terminology from~\cite{KST}) and some other properties become quite involved.

In this note, we make an effort to describe the elements of $\Etld$ in more detail, to give a clearer picture of how this forcing behaves. To achieve this, we do not rely on the many parameters fixed in~\cite{KST}, but we reduce them to a more general framework, closer to~\cite{HSh}, containing much less information. Since this framework depends on a \emph{tree with creatures}, we start by developing in detail the type of creatures we use to define the forcing. So our framework results in a class of forcings of the form $\Etld_\tbf$ where $\tbf$ is what we call a \emph{tree-creature frame}.

Afterwards, we focus on two main results about $\Etld_\tbf$: we show that, when $\tbf$ satisfies certain hypothesis (\autoref{hyplog}), the forcing is $\sigma$-$\calY_*$-linked and $\sigma$-$\Dwf_*$-linked. The first property corresponds to a generalization of strong FAM limits for intervals introduced in~\cite{Uribethesis,CMU}, which can be used in FS iterations to produce large posets with fam-limits. The second property is the version of ultrafilter limits for intervals. The proof of these properties is based on the original proof of~\cite[Lem.~1.20]{KST}.

\section{Tree-creature frames and the forcing}\label{sec:tree}

This section is based on~\cite{HSh,KST}. 
We first look at the atomic structures that we use to define our forcing.

\begin{definition}\label{def:norm}
Let $C$ be a set. 
A \emph{norm of subsets of $C$} is a function $\|\cdot\| \colon \pts(C) \to [0,\infty]$ satisfying:
\begin{enumerate}[label= \normalfont (\roman*)]
\item $\|A\|\leq \|B\|$ whenever $A\subseteq B \subseteq C$.
\item $\|\emptyset\| = 0$.
\end{enumerate}
\end{definition}

\begin{example}[{cf.~\cite[Def.~1.12]{KST}}]\label{ex:norm}
  Given a non-empty set $C$ and a natural number $m>1$, the following is a norm of subsets of $C$:
  \[\|A\|^C_{m}:=\log_m\left(\frac{|C|}{|C|-|A|}\right).\]
 Equivalently, $\|A\|^C_m$ is the solution $x\in\R$ to the equation
  \begin{equation}\label{normchar} 
  |A| = |C|\left(1- m^{-x}\right).
  \end{equation}
  Notice that $\|C\|^C_m = \infty$ and, whenever $|A|=|C|-1$, $\|A\|^C_m = \log_m |C|$.
  
  Typically, $m\ll |C|$. For example, when $|C|>\frac{m}{m-1}$, we obtain that all singletons have norm ${<}1$. Moreover, a set $A$ with norm ${\geq}1$ must have size close to $C$ as long as $m$ is very large. 
\end{example}

\begin{lemma}[{cf.~\cite[Lem.~1.16]{KST}}]\label{normCmprop}
 Let $C$ be a non-empty set and $m>1$ a natural number. Then the norm $\|\cdot\|:=\|\cdot\|^C_m$ satisfies the following properties.
\begin{enumerate}[label=\normalfont(\alph*)]
\item\label{norm1} For $A\subseteq C$, $\|A\| \geq 1$ iff $\displaystyle |A| \geq |C|\left(1-\frac{1}{m}\right)$.

\item\label{norm2} Let $I$ be a finite set, $\bar A=\seq{A_i}{i\in I}$ a sequence of subsets of $C$, and $g\colon I\to [0,1]$ such that $\sum_{i\in I}g(i) =1$. For a real $\varp >0$, consider the set
\[B_{\bar A,g}(\varp)=\lset{k\in C}{\sum\set{g(i)}{i\in I,\ k\in A_i}> 1-\varp}.\]
Then $\|B_{\bar A,g}(\varp)\|\geq \min_{i\in I}\|A_i\| - \log_m \frac{1}{\varp}$.

\end{enumerate}
\end{lemma}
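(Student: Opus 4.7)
For part \ref{norm1}, the strategy is simply to unfold the definition. Writing $\|A\|\geq 1$ as $\log_m(|C|/(|C|-|A|))\geq 1$ and exponentiating gives $|C|\geq m(|C|-|A|)$, which rearranges to $|A|\geq |C|(1-1/m)$. All implications are reversible, so the equivalence follows with no real content beyond algebra. Equivalently, one can appeal directly to the characterization \eqref{normchar}: $\|A\|\geq 1$ iff $|A|\geq |C|(1-m^{-1})$.

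For part \ref{norm2}, the cleanest approach is a double-counting (Markov-style) argument on the complements $C\setminus A_i$. Let $\alpha:=\min_{i\in I}\|A_i\|$. By \eqref{normchar}, the defect of $A_i$ is bounded by $|C\setminus A_i|=|C|\,m^{-\|A_i\|}\leq |C|\,m^{-\alpha}$ for every $i\in I$. Consider the weighted sum
\[
S\;:=\;\sum_{i\in I} g(i)\,|C\setminus A_i|\;=\;\sum_{k\in C}\;\sum_{\{i\in I\,:\,k\notin A_i\}} g(i),
\]
where the second equality is Fubini for finite sums. Since $\sum_i g(i)=1$, the first expression gives $S\leq |C|\,m^{-\alpha}$.

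On the other hand, the definition of $B:=B_{\bar A,g}(\varp)$ rewrites as: $k\in C\setminus B$ iff $\sum_{\{i\,:\,k\notin A_i\}} g(i)\geq \varp$. Restricting the outer sum in the Fubini expression to $C\setminus B$ therefore yields $S\geq \varp\cdot |C\setminus B|$. Combining the two bounds gives $|C\setminus B|\leq \varp^{-1}|C|\,m^{-\alpha}$, i.e.\
\[
|C|-|B|\;\leq\; |C|\,m^{-(\alpha-\log_m(1/\varp))},
\]
which by \eqref{normchar} is exactly the statement $\|B\|\geq \alpha-\log_m(1/\varp)$.

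I do not expect a real obstacle here: the only subtlety is remembering to convert between the logarithmic form of the norm and its multiplicative characterization via \eqref{normchar}, and to be careful that the strict inequality defining $B$ turns into a weak inequality for $C\setminus B$ (which is what makes the Markov step go through cleanly). The proof does not require $\sum_i g(i)=1$ strictly; one only needs that it equals $1$ so that the weighted average of $|C\setminus A_i|$ is controlled by the worst $A_i$, which is where the minimum appears.
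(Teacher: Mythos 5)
Your proof is correct and is essentially the paper's own argument: the paper runs the same Fubini/double-counting computation on $\sum_{i}g(i)|A_i|$ and splits the sum over $B$ and $C\smallsetminus B$, which is exactly your Markov-style bound on $S=\sum_i g(i)|C\smallsetminus A_i|$ after complementation. (Only your closing remark about not needing $\sum_i g(i)=1$ ``strictly'' is garbled, but it does not affect the argument.)
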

\begin{proof}
\ref{norm1}: By \autoref{normchar}, $\|A\|\geq 1$ iff $|A|=|C|(1-m^{-\|A\|}) \geq |C|(1-m^{-1})$.

\ref{norm2} Let $x:=\min_{i\in I}\|A_i\|$ and $B:=B_{\bar A,g}(\varp)$. Then
\begin{align*}
|C|(1-m^{-x}) & = \sum_{i\in I}g(i)|C|(1-m^{-x}) \leq \sum_{i\in I}g(i)|A_i| = \sum_{i\in I}\sum_{k\in A_i}g(i) \\ 
 & = \sum_{k\in C}\sum\set{g(i)}{i\in I,\ k\in A_i}\\ 
 & = \sum_{k\in B}\sum\set{g(i)}{i\in I,\ k\in A_i} + \sum_{k\in C\menos B}\sum\set{g(i)}{i\in I,\ k\in A_i}\\ 
 & \leq |B| + (|C|-|B|)(1-\varp) =  |B|\varp + |C|(1-\varp).
\end{align*}
Then $|C|(\varp-m^{-x}) \leq |B|\varp$, so
\[|B|\geq |C|\left(1-\frac{m^{-x}}{\varp}\right) = |C|\left(1-m^{-\left(x-\log_m \frac{1}{\varp}\right)}\right).\]
Therefore, $\displaystyle\|B\| \geq x-\log_m \frac{1}{\varp}$.
%
\end{proof}

Property~\ref{norm2} in \autoref{normCmprop} is essential for the forcing arguments. 
We isolate it below as a notion of \emph{co-bigness} as in~\cite{HSh}.

\begin{definition}\label{def:mbig}
   A norm $\|\cdot\|$ of subsets of $C$ is \emph{$m$-co-big} if it satisfies~\ref{norm2} of \autoref{normCmprop}.
\end{definition}

Co-bigness is a property that allows ``homogenizing'' many sets without losing much of the norm. A way to homogenize is taking intersection, for which we may not lose much norm, either.

\begin{lemma}[{cf.~\cite[Lem.~1.16~(c)]{KST}}]\label{lem:mbig}
   If $\|\cdot\|$ is an $m$-co-big norm of subsets of $C$ then, whenever $I$ is a finite set and $\seq{A_i}{i\in I}$ is a sequence of subsets of $C$,
\[\left\|\bigcap_{i\in I}A_i\right\| \geq \min_{i\in I}\|A_i\|-\log_m |I|.\]
\end{lemma}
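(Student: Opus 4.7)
The plan is to derive this directly from the $m$-co-bigness property by a clever choice of the weights $g$ and the tolerance $\varp$ so that the set $B_{\bar A,g}(\varp)$ collapses to exactly the intersection $\bigcap_{i\in I}A_i$.

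First I would dispose of the trivial case $I=\emptyset$ (where the intersection is $C$ and the right-hand side is $-\infty$, so the inequality is vacuous) and assume $|I|\geq 1$. Then I would apply \autoref{normCmprop}\ref{norm2} (which is precisely the definition of $m$-co-big, by \autoref{def:mbig}) to the given sequence $\seq{A_i}{i\in I}$, with the uniform weight function $g(i):=\frac{1}{|I|}$ (which clearly lies in $[0,1]$ and sums to $1$) and the tolerance $\varp:=\frac{1}{|I|}>0$.

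The key observation is the following computation: for $k\in C$,
\[
\sum\set{g(i)}{i\in I,\ k\in A_i} = \frac{|\set{i\in I}{k\in A_i}|}{|I|},
\]
so $k\in B_{\bar A,g}(\varp)$ iff $|\set{i\in I}{k\in A_i}|>|I|-1$, i.e.\ iff $k\in A_i$ for every $i\in I$. Hence $B_{\bar A,g}(\varp)=\bigcap_{i\in I}A_i$. Substituting into the conclusion of \autoref{normCmprop}\ref{norm2} yields
\[
\left\|\bigcap_{i\in I}A_i\right\|=\|B_{\bar A,g}(\varp)\|\geq \min_{i\in I}\|A_i\|-\log_m\tfrac{1}{\varp}=\min_{i\in I}\|A_i\|-\log_m|I|,
\]
as required.

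There is essentially no obstacle: the only subtlety is the bookkeeping check that $\sum g(i)\cdot 1_{A_i}(k)>1-\varp$ is equivalent to $k$ lying in every $A_i$, which follows because with the uniform weights this strict inequality forces at least $|I|$ indices to contribute. One could alternatively sanity-check the edge case $|I|=1$: then $g\equiv 1$, $\varp=1$, $B_{\bar A,g}(1)=A_0$, and the bound reduces to $\|A_0\|\geq\|A_0\|$, which is trivially correct.
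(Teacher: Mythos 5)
Your proof is correct and is exactly the paper's argument: apply $m$-co-bigness with $g(i)=\varp=\frac{1}{|I|}$ and observe that $B_{\bar A,g}(\varp)=\bigcap_{i\in I}A_i$. You simply spell out the verification of that identity in more detail than the paper does.
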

\begin{proof}
   Apply the $m$-co-bigness to $g(i)=\varp=\frac{1}{|I|}$. In this case, $B_{\bar A,g}=\bigcap_{i\in I}A_i$.
\end{proof}

Our forcing is composed by trees. 
When $T$ is a tree and $t\in T$, we denote by $\suc_T(t)$ the \emph{set of immediate successors of $t$ in $T$.} When $T$ has a single root (i.e.\ only one element of height $0$), the \emph{stem of $T$} is the node $s$ of smallest height such that $|\suc_T(s)|\geq 2$ (if it exists). For any ordinal $\alpha$, $\Lev_\alpha(T)$ denotes the set of nodes of $T$ at level $\alpha$.

\begin{definition}\label{def:treeframe}
  A tuple $\tbf = \la T_*,b_*,\seq{\|\cdot\|_t}{t\in T_*}\ra$ is a \emph{tree-creature frame} when:
  \begin{enumerate}[label=\normalfont(T\arabic*)]
\item $T_*$ is a finitely-branching tree of height $\omega$ with a single root and without maximal nodes. Without loss of generality, we can assume that $T_*$ is a subtree of $\omega^{<\omega}$.

\item\label{tree2} $b_*\colon T_*\to \omega$ such that $\displaystyle \lim_{n\to \infty}\min_{t\in \Lev_n(T_*)} b_*(t) = \infty$.

\item For each $t\in T_*$, $\|\cdot\|_t$ is a $b_*(t)$-co-big norm of subsets of $\suc_{T_*}(t)$ such that all singletons have norm ${<}1$ and $\|\suc_{T_*}(t)\|_t=\infty$.
\end{enumerate} 
\end{definition}

Tree-creature frames are easy to construct. In the practice, $T_*$ and $b_*$ are constructed simultaneously by induction on the height $n$ such that $b^*(t)$ for $t\in \Lev_n(T_*)$ is much larger than $|\Lev_n(T_*)|$ and the $b^*(s)$ defined so far, and $|\suc_{T_*}(t)|\gg b_*(t)$. Co-big norms are easily obtained from~\autoref{ex:norm}. In~\cite[Def.~1.12]{KST}, $b^*(t)$ and $\suc_{T_*}(t)$ only depend on the height of $t$. On the other hand,~\cite{HSh} proceeds by induction on $t$ using the lexicographic order $\triangleleft$ of $\omega^{<\omega}$ satisfying $|s|<|t| \Rightarrow s \mathrel{\triangleleft} t $, and defines $b_*(t)$ much larger than everything defined for $s \mathrel{\triangleleft} t $ in $T_*$ (and $|\suc_{T_*}(t)|\gg b_*(t)$).

The following is the central definition of this work.

\begin{definition}[{\cite[Def.~1.14]{KST}}]\label{def:Etilde}
    Let $\tbf$ be a tree-creature frame as in \autoref{def:treeframe}. 
    Define the poset $\Etld:= \Etld_{\tbf}$ whose conditions are subtrees $p\subseteq T_*$ such that, for each $t\in p$ above $\stem(p)$, $\|p\|_t:=\|\suc_p(t)\|_t \geq 1+ \frac{1}{|\stem(p)|}$ (we interpret $\frac{1}{0} = \infty$). We order $\Etld$ by $\subseteq$.
    
    Note that $T_*\in \Etld$ and that it is the maximum condition of $\Etld$. 
    Also, whenever $p\in\Etld$ and $t\in p$ is above the stem, $p|^t \in \Etld$ (the subtree of $p$ whose nodes are precisely the nodes in $p$ compatible with $t$) and it is stronger than $p$.
\end{definition}

Conditions with stem $\la\ \ra$ are somewhat uninteresting.

\begin{fact}\label{cent}
Whenever $p\in \Etld$ and $\stem(p) = \la\ \ra$, $\|p\|_t =\infty$ for all $t\in p$. 
As a consequence, the set of conditions with stem $\la\ \ra$ is centered.
\end{fact}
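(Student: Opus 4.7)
The first assertion unwinds directly from the definition. The plan is to observe that $\stem(p) = \la\ \ra$ gives $|\stem(p)| = 0$, so by the convention $\tfrac{1}{0} = \infty$ the defining requirement $\|p\|_t \geq 1 + \tfrac{1}{|\stem(p)|}$ on each $t \in p$ above the stem — equivalently, on every $t \in p$, since every node extends the root — forces $\|p\|_t = \infty$.

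For the centeredness clause, I would start from finitely many conditions $p_1,\dots,p_n \in \Etld$ each having stem $\la\ \ra$, and propose as a common lower bound the intersection $q := \bigcap_{i\le n} p_i$. This is automatically a subtree of $T_*$ (being an intersection of subtrees) and contains the root $\la\ \ra$. The key computation is the norm estimate at an arbitrary $t \in q$: since $t \in p_i$ for all $i$, the first part gives $\|p_i\|_t = \infty$, and then \autoref{lem:mbig} applied with $C := \suc_{T_*}(t)$, $m := b_*(t)$, and $A_i := \suc_{p_i}(t)$ yields
\[
\|\suc_q(t)\|_t = \Bigl\|\bigcap_{i\le n}\suc_{p_i}(t)\Bigr\|_t \;\geq\; \min_i \|p_i\|_t - \log_{b_*(t)} n \;=\; \infty.
\]

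To close the argument I still need to check that $q$ is a genuine condition of $\Etld$. Because singletons in $\suc_{T_*}(t)$ have norm $<1$ and $\|\emptyset\|_t = 0$, the equality $\|\suc_q(t)\|_t = \infty$ at every $t \in q$ forces $|\suc_q(t)| \geq 2$ everywhere; hence $q$ has no maximal nodes and its stem is indeed $\la\ \ra$, and the defining norm inequality for $q$ reduces to $\|q\|_t \geq \infty$, which has just been established. Thus $q \in \Etld$ is a common extension of the $p_i$, proving centeredness. I do not anticipate any real obstacle: the entire content is that infinite norms are preserved under finite intersections via the $b_*(t)$-co-bigness packaged in \autoref{lem:mbig}.
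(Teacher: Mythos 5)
Your proof is correct and follows the same route as the paper: the first part is the convention $\tfrac{1}{0}=\infty$ applied to the defining inequality, and centeredness comes from intersecting the conditions and invoking \autoref{lem:mbig} to see that the norm $\infty - \log_{b_*(t)}|I| = \infty$ survives at every node. Your extra remark that infinite norm forces $|\suc_q(t)|\geq 2$ (so the intersection has no maximal nodes and still has empty stem) is a small verification the paper leaves implicit, but the argument is essentially identical.
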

\begin{proof}
The first part follows by \autoref{def:Etilde} and the convention $\frac{1}{0} = \infty$. The second part follows by \autoref{lem:mbig}: when $I$ is finite and $\la r_i:\, i\in I\ra$ is a sequence of conditions with empty stem, $r:=\bigcap_{i\in I}r_i$ is in $\Etld$ (even with empty set) because, for any $t\in r$,
\[\left\|\bigcap_{i\in I}\suc_{r_i}(t)\right\|_t \geq \infty - \log_{b_*(t)}|I| = \infty.\qedhere\]
\end{proof}

It is not hard to show that $\Etld$ has the ccc. Moreover, it is $\sigma$-$k$-linked for any $2\leq k<\omega$. Recall that, given a poset $\Por$, $Q\subseteq \Por$ is \emph{$k$-linked} if any subset of $Q$ of size ${\leq} k$ has a lower bound in $\Por$; and the poset $\Por$ is \emph{$\sigma$-$k$-linked} if it can be covered by countably many $k$-linked subsets.

\begin{lemma}\label{klinked}
  Let $\tbf$ be a tree-creature frame. Then, for any $2\leq k<\omega$:
  \begin{enumerate}[label=\normalfont(\alph*)]
	\item\label{klinkeda} For each $s\in T_*$, the set $\displaystyle E_{s,k} := \lset{p\in \Etld}{\stem(p)=s \text{ and } \|p\|_t \geq 1 + \frac{1}{|s|}+\log_{b_*(t)}k}$ is $k$-linked in $\Etld:= \Etld_\tbf$.
	\item\label{klinkedb} $\bigcup \set{E_{s,k}}{s\in T_*,\ k<\omega}$ is dense in $\Etld$. In particular, $\Etld$ is $\sigma$-$k$-linked.
  \end{enumerate}
\end{lemma}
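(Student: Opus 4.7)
The plan is to use the $b_*(t)$-co-bigness of the norms (\autoref{lem:mbig}) for part~(a), and the growth of $b_*$ along levels---condition~(T2) of \autoref{def:treeframe}---for part~(b). The extra summand $\log_{b_*(t)}k$ built into the definition of $E_{s,k}$ is tuned precisely to absorb the loss incurred by \autoref{lem:mbig} when $k$ conditions are intersected; the main (mild) subtlety will be ensuring that the stem of the intersection does not climb above $s$, which is automatic once the norm bound $\|\suc_q(t)\|_t\geq 1+\tfrac{1}{|s|}>1$ is established, because singletons have norm ${<}1$.

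For (a), fix $s\in T_*$ and $k\geq 2$, take $p_1,\dots,p_k\in E_{s,k}$, and set $q:=\bigcap_{i=1}^{k}p_i$. Below level $|s|$ each $p_i$ reduces to the unique path from the root to $s$ (because $\stem(p_i)=s$, so no node of lower height branches), so $q$ coincides with that chain there and contains $s$. For any $t\in q$ at level ${\geq}|s|$, $\suc_q(t)=\bigcap_{i=1}^k\suc_{p_i}(t)$, and \autoref{lem:mbig} applied with $|I|=k$ gives
\[\|\suc_q(t)\|_t\;\geq\;\min_{1\leq i\leq k}\|\suc_{p_i}(t)\|_t\;-\;\log_{b_*(t)}k\;\geq\;1+\tfrac{1}{|s|}.\]
Since this value exceeds $1$ and singletons have norm ${<}1$, $|\suc_q(t)|\geq 2$ at every such $t$; hence $q$ has no maximal nodes, $\stem(q)=s$, and $q\in\Etld$ is a common refinement of $p_1,\dots,p_k$, so $E_{s,k}$ is $k$-linked.

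For (b), let $p\in\Etld$ and $k\geq 2$ be arbitrary, and set $s_0:=\stem(p)$. The plan is to locate $t^*\in p$ with $|t^*|>|s_0|$ such that $p|^{t^*}\in E_{t^*,k}$. Note that $\stem(p|^{t^*})=t^*$, because $\|p\|_{t^*}\geq 1+\tfrac{1}{|s_0|}>1$ forces $|\suc_p(t^*)|\geq 2$; moreover, for every $t'\in p|^{t^*}$ at or above $t^*$ one already has $\|p\|_{t'}\geq 1+\tfrac{1}{|s_0|}$. Hence $p|^{t^*}\in E_{t^*,k}$ will follow as soon as
\[\log_{b_*(t')}k\;\leq\;\tfrac{1}{|s_0|}-\tfrac{1}{|t^*|}\qquad\text{holds for every such }t'.\]
Using (T2), pick $N>|s_0|$ large enough that $\min_{t'\in\Lev_n(T_*)}b_*(t')\geq k^{|s_0|N/(N-|s_0|)}$ for every $n\geq N$; this makes the displayed inequality valid whenever $|t^*|=N$ and $|t'|\geq N$. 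Any such $t^*\in p$ (which exists because $p$ has nodes at every level) then yields $p|^{t^*}\in E_{t^*,k}$. Thus, for each fixed $k\geq 2$, the countable family $\set{E_{s,k}}{s\in T_*}$ of $k$-linked sets is dense in $\Etld$, yielding $\sigma$-$k$-linkedness; in particular $\bigcup\set{E_{s,k}}{s\in T_*,\ k<\omega}$ is dense.
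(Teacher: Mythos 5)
Your proposal is correct and follows essentially the same route as the paper: part~(a) intersects the $k$ conditions and absorbs the $\log_{b_*(t)}k$ loss from \autoref{lem:mbig} using the extra summand in the definition of $E_{s,k}$, and part~(b) uses~(T2) to find a node $s$ high enough that $\frac{1}{|s|}+\log_{b_*(t)}k\leq\frac{1}{|s_0|}$ for all $t$ above it and then restricts $p$ to that node. Your additional checks (that the stem of the intersection is exactly $s$, and the explicit level bound $k^{|s_0|N/(N-|s_0|)}$) are just spelled-out versions of steps the paper leaves implicit.
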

\begin{proof}\ \\
   \ref{klinkeda}: If $\set{p_i}{i<k}\subseteq E_{s,k}$, let $p:=\bigcap_{i<k}p_i$. This is a subtree of $T_*$ with $s\in p$ and, for $t\in p$ above $s$, by \autoref{lem:mbig},
   \[\|p\|_t \geq 1 + \frac{1}{|s|}+\log_{b_*(t)}k - \log_{b_*(t)}k = 1+\frac{1}{|s|},\]
   so $p\in \Etld$. This shows that $E_{s,k}$ is $k$-linked.
   
   \ref{klinkedb}: Let $p\in \Etld$ and $s_0:=\stem(p)$. By~\ref{tree2}, find $s\in p$ (above $s_0$) such that $\frac{1}{|s|}+\log_{b_*(t)} k < \frac{1}{|s_0|}$ for all $t\in T_*$ above $s$. Then, $p|^s \in E_{s,k}$ and it is stronger than $p$.
\end{proof}

\section{Fam-limits}\label{sec:famlim}

From now on, we fix a tree-creature frame $\tbf$ as in \autoref{def:treeframe}, and let $\Etld:= \Etld_\tbf$. We also fix the following assumption (stronger than~\ref{tree2}) until the end of this paper.

\begin{assumption}\label{hyplog}
$\displaystyle \lim_{n\to \infty} \min_{t\in \Lev_n(T_*)} \log_n b_*(t) = \infty$. Equivalently, the function 
\[n\mapsto \min_{t\in\Lev_n(T_*)}b_*(t)\] 
dominates the set $\set{m^\id}{m<\omega}$, where $m^\id$ is the function sending $n\mapsto m^n$.
\end{assumption}

Based on~\cite[Lem.~1.20]{KST}, 
we plan to show that $\Etld=\Etld_\tbf$ has (strong) fam-limits under the previous assumption. We first review the formalization of the notion of \emph{strong fam-limits} from \cite{Uribethesis,CMU}.

\begin{definition}\label{def:fam}
   Let $\Bor$ be a Boolean algebra. A \emph{finitely additive measure (fam) on $\Bor$} is a map $\Xi\colon \Bor\to[0,\infty]$ satisfying
   \begin{enumerate}[label=\normalfont (\roman*)]
    \item \label{m4a} $\Xi (0_{\Bor})=0$,
            
    \item \label{m4b} $\Xi(a\vee b)=\Xi(a)+\Xi(b)$ whenever $a,b\in\Bor$ and $a \wedge b= 0_{\Bor}$.
   \end{enumerate}
   If in addition $\Xi(1_\Bor)=1$, we say that $\Xi$ is a \emph{probability fam}. From now on, we will assume that all our fams are of probability.
  
   Denote by $\Pbf^{\Bor}$ the collection of \emph{finite partitions of $1_\Bor$}, i.e. $P\in \Pbf^{\Bor}$ iff $P\subseteq\Bor$ is finite, $a\wedge b=0_\Bor$ for $a\neq b$ in $P$, and $\bigvee P = 1_\Bor$. When $\Xi$ is a fam on $\Bor$, we also write $\Pbf^\Xi$ for $\Pbf^\Bor$.
   
   Let $K$ be a non-empty set. Recall that a \emph{field of sets over $K$} is a subalgebra of $\pts(K)$ (under the set-theoretic operations). When $\Bor$ is a field of sets over $K$ and $\Xi$ is a fam on $\Bor$, we say that $\Xi$ is \emph{free} if, for all $k\in K$, $\{k\}\in \Bor$ and $\Xi(\{k\}) = 0$ (this implies that all finite subsets of $K$ have measure zero).
\end{definition}

We use the following particular type of fams.

\begin{definition}[{\cite{CMUP}}]\label{uap}
  Let $\Bor$ be a field of sets over $K$ and $\Xi$ a (probability) fam on $\Bor$. We say that $\Xi$ has the \emph{uniform approximation property (uap)} if, for any $\varp>0$ and any $P\in \Pbf^\Xi$, there is some non-empty finite $u\subseteq K$  such that, for all $B\in P$,
        \[\left|\frac{|u\cap B|}{|u|} - \Xi(B)\right| < \varp.\]
\end{definition}

\begin{lemma}[{\cite{CMUP}}]\label{freeuap}
  Any fam over a field of sets $\Bor$ over $K$ satisfying that all finite sets in $\Bor$ have measure zero has the uap. In particular, any free fam has the uap.
\end{lemma}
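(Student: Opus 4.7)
The plan is to approximate the values $p_i := \Xi(B_i)$ of a given partition $P=\{B_1,\dots,B_k\}\in\Pbf^\Xi$ by rationals with a common denominator $N$, and then realize those rationals as counting ratios by selecting the appropriate number of elements from each piece. The crucial input from the hypothesis is the following observation, which I would establish first: if $B\in\Bor$ and $\Xi(B)>0$, then $B$ must be infinite, for otherwise $B$ would be a finite set in $\Bor$ and the hypothesis would force $\Xi(B)=0$. So each $B_i$ with $p_i>0$ is infinite, which is exactly what is needed to pick many distinct elements from it.

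Given $\varp>0$, I would fix $N\in\N$ with $N>k/\varp$ and set $I_+:=\{i\leq k:\, p_i>0\}$, which is nonempty since $\sum_i p_i=1$. Pick some $i^*\in I_+$ and define $n_i:=\lfloor Np_i\rfloor$ for $i\in I_+\menos\{i^*\}$, $n_i:=0$ for $i\notin I_+$, and $n_{i^*}:=N-\sum_{i\neq i^*}n_i$. A short bookkeeping check gives $\sum_i n_i=N$, $|n_i/N-p_i|\leq 1/N<\varp$ for $i\in I_+\menos\{i^*\}$, and $|n_{i^*}/N-p_{i^*}|\leq (k-1)/N<\varp$ since the deficit redistributed onto $i^*$ is bounded by $|I_+|-1\leq k-1$.

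Next, using infiniteness of each $B_i$ with $i\in I_+$, pick pairwise disjoint sets $S_i\subseteq B_i$ with $|S_i|=n_i$, and let $u:=\bigcup_{i\in I_+}S_i$. Because $P$ is a partition of $K$, the $S_i$ are automatically disjoint and $u\cap B_i=S_i$ for each $i$, whence $|u|=N$ and $|u\cap B_i|/|u|=n_i/N$. The bound in the previous paragraph then gives $\bigl||u\cap B_i|/|u|-\Xi(B_i)\bigr|<\varp$ for every $B_i\in P$, which is the uap.

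For the final sentence, if $\Xi$ is free then every singleton $\{k\}$ lies in $\Bor$ and satisfies $\Xi(\{k\})=0$. Any finite $F\in\Bor$ is then the disjoint union of finitely many such singletons, and finite additivity~\ref{m4b} gives $\Xi(F)=0$, so the standing hypothesis of the lemma applies. I do not expect any real obstacle here: the only mildly delicate point is ensuring that the integers $n_i$ sum exactly to $N$ while remaining within $\varp$ of $Np_i$, which is handled by concentrating the rounding slack on a single index $i^*\in I_+$ whose block $B_{i^*}$ is guaranteed to be infinite.
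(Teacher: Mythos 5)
Your proof is correct and complete. Note that the paper itself gives no argument for this lemma --- it is quoted from [CMUP] --- so there is nothing internal to compare against; your write-up supplies the standard argument one would expect there. The two points that actually need care are both handled: (i) the rounding bookkeeping, where concentrating the slack on a single index $i^*\in I_+$ keeps every error below $(k-1)/N<\varp$ while forcing $\sum_i n_i=N$ exactly and $n_{i^*}\geq Np_{i^*}>0$, so $u\neq\emptyset$; and (ii) the reduction of ``positive measure implies infinite'' from the hypothesis, which is what licenses choosing $n_i$ distinct points inside each $B_i$ with $i\in I_+$. One tacit fact worth a half-sentence if you write this up formally: under the hypothesis $K$ itself must be infinite (otherwise $\Xi(1_\Bor)=0\neq 1$), so the statement is never vacuously broken; and since $P$ partitions $K$ into pairwise disjoint members of $\Bor$, the identity $u\cap B_i=S_i$ and $\sum_i\Xi(B_i)=1$ both follow from finite additivity as you use them. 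The deduction of the ``in particular'' clause from closure of $\Bor$ under finite unions is likewise fine.
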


Surprisingly, fams with the uap having a finite set of positive measure are easily characterized. Concretely, for any such fam $\Xi$ on $\Bor$ there is some natural number $d>0$ such that, for any $B\in\Bor$, $\Xi(B)$ has the form $\frac{\ell}{d}$ for some $0\leq \ell \leq d$, and $\ell\leq |B|$ when $B$ is finite. Details will be available in~\cite{CMUP}.

For fams with the uap, the size of the set $u$ has a bound that depends only on $\varp$ and $|P|$.

\begin{lemma}[{\cite[Lem.~1.2]{KST}, \cite{CMUP}}]\label{ubound}
Let $\Xi$ be a fam with the uap on a field of sets $\Bor$ over $K$. Then, for any $\varp>0$ and $m\in\omega$, there is some $M:= M_{\varp,m}\in\omega$ such that, for any $P \in \Pbf^\Xi$, if $|P|\leq m$ then the $u$ in \autoref{uap} can be found of size ${\leq}M$.
\end{lemma}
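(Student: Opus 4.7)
The plan is to exploit the characterization of fams with the uap mentioned in the paragraph immediately preceding the lemma, splitting into two cases according to whether $\Xi$ assigns positive measure to some finite subset of $K$.

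\emph{Case 1:} Some finite element of $\Bor$ has positive $\Xi$-measure. By the cited characterization, there is a fixed $d\in\omega$ with $d>0$ (depending only on $\Xi$) such that every $\Xi(B)$ has the form $\ell/d$ with $\ell\in\{0,1,\ldots,d\}$, and $d\,\Xi(B)\leq |B|$ for every finite $B\in\Bor$. Given any $P\in\Pbf^\Xi$, write $\Xi(B)=\ell_B/d$ and, for each $B\in P$, select exactly $\ell_B$ elements of $B$ (feasible since either $B$ is infinite or $\ell_B\leq |B|$). The disjointness of $P$ makes the union $u$ of these selections a set of size $\sum_{B\in P}\ell_B=d$ with $|u\cap B|/|u|=\Xi(B)$ exactly for every $B\in P$. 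Setting $M_{\varp,m}:=d$ handles this case.

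\emph{Case 2:} Every finite set in $\Bor$ has measure zero, so any $B\in\Bor$ with $\Xi(B)>0$ must be infinite. Put $N:=\lceil 1/\varp\rceil+1$; given $P=\{B_1,\ldots,B_n\}$ with $n\leq m$, set $r_i:=\Xi(B_i)$ and $\ell_i:=\lfloor Nr_i\rfloor$. Write $n':=|\{i:r_i>0\}|$. Each $Nr_i-\lfloor Nr_i\rfloor\in[0,1)$ and vanishes when $r_i=0$, so the shortfall $s:=N-\sum_i\ell_i$ is a non-negative integer strictly less than $n'$. Add $1$ to $s$ distinct indices with $r_i>0$ to obtain adjusted values satisfying $\sum_i\ell_i=N$ and $|\ell_i/N-r_i|\leq 1/N<\varp$ for every $i$. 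Since every $B_i$ with $r_i>0$ is infinite, pick $\ell_i$ arbitrary elements from each such $B_i$ (and none from those with $r_i=0$); the union $u$ has size $N$ and witnesses the uap for $P$. Setting $M_{\varp,m}:=N$ handles this case.

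In either case the resulting $M_{\varp,m}$ depends only on $\Xi$, $\varp$, and $m$ (and in fact not on $m$ at all). The one delicate step is the shortfall estimate $s<n'$ in Case~2: it ensures we can distribute the extra units exclusively among indices whose $B_i$ is infinite (hence comfortably able to host the additional elements), while each individual added unit worsens the approximation by at most $1/N<\varp$. Everything else reduces either to the pre-stated characterization or to elementary combinatorial bookkeeping.
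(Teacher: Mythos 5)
Your argument is correct. Note that the paper itself gives no proof of this lemma (it is quoted from \cite{KST} and \cite{CMUP}), so there is nothing to compare against line by line; judged on its own, your two-case strategy works. In Case~1 you lean entirely on the characterization stated in the paragraph preceding the lemma (every value of $\Xi$ is of the form $\ell/d$ with $\ell\leq|B|$ for finite $B$), which the paper asserts but defers to \cite{CMUP}; granting that, your selection of exactly $\ell_B$ points from each block produces $u$ with $|u|=\sum_B\ell_B=d$ and $|u\cap B|/|u|=\Xi(B)$ exactly, since $P$ partitions $K$. In Case~2 your rounding argument is a quantitative sharpening of \autoref{freeuap}: the shortfall estimate $0\leq s=\sum_i(Nr_i-\lfloor Nr_i\rfloor)<n'$ is right (each summand lies in $[0,1)$ and vanishes when $r_i=0$, and $n'\geq 1$ because $\sum_i r_i=1$), the blocks receiving an extra unit have positive measure and are therefore infinite, and the per-block error is at most $1/N<\varp$ independently of the number of blocks. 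A pleasant byproduct of your proof, worth stating explicitly, is that $M_{\varp,m}$ can be taken independent of $m$ (namely $\max\{d,\lceil 1/\varp\rceil+1\}$ uniformly over both cases), which is formally stronger than what the lemma asks for; the only external debt is the unproved-in-this-paper structure theorem used in Case~1.
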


We are ready to introduce our formalization of forcings with fam-limits.

\begin{definition}[{\cite{Uribethesis,CMU}}]\label{def:famlim}
    Let $\Por$ be a poset.
    \begin{enumerate}[label = \normalfont (\arabic*)]
        \item\label{faml1} Let $\Xi\colon \pts(K) \to [0,1]$ be a fam with the uap, $\bar I = \seq{I_k}{k\in K}$ a partition of a set $W$ into finite sets, and $\varp>0$.

        A set $Q\subseteq\Por$ is \emph{$(\Xi,\bar I,\varp)$-linked} if there is a function $\lim\colon Q^W\to \Por$ and a $\Por$-name $\dot \Xi'$ of a fam with the uap on $\pts(K)$ extending $\Xi$ such that, for any $\bar p = \seq{ p_\ell}{\ell\in W} \in Q^W$,
        \begin{equation}\label{intext}
        \lim \bar p \Vdash \int_K \frac{|\set{\ell \in I_k}{p_\ell \in \dot G}|}{|I_k|}d\dot \Xi' \geq 1-\varp.
        \end{equation}

        \item\label{faml2} Let $\mu$ be an infinite cardinal, and let $\calY\subseteq\calY_*$, where $\calY_*$ is the class of all pairs $(\Xi,\bar{I})$ such that $\Xi$ is a fam with the uap on some $\pts(K)$ and $\bar{I} = \seq{I_k}{ k\in K}$ is a pairwise disjoint family of finite non-empty sets.\footnote{Notice that $K$ is not a fixed set.}
        
        The poset $\Por$ is \emph{$\mu$-$\calY$-linked}, witnessed by $\seq{Q_{\alpha,\varp}}{\alpha<\mu,\ \varp\in(0,1)\cap \Q}$, if:
        \begin{enumerate}[label = \normalfont (\roman*)]
            \item Each $Q_{\alpha,\varp}$ is $(\Xi,\bar I,\varp)$-linked for any $(\Xi,\bar I) \in \calY$.
            \item\label{faml2d} For $\varp\in(0,1)\cap \Q$, $\bigcup_{\alpha<\omega} Q_{\alpha,\varp}$ is dense in $\Por$.
        \end{enumerate}

        \item\label{faml3} The poset $\Por$ is \emph{uniformly $\mu$-$\calY$-linked} if there is some $\seq{Q_{\alpha,\varp}}{\alpha<\mu,\ \varp\in(0,1)\cap \Q}$ as above, such that in~\ref{faml1} the name $\dot \Xi'$ only depends on $(\Xi,\bar I)$ (and not on any $Q_{\alpha,\varp}$, although we may have different limits on each $Q_{\alpha,\varp}$).
    \end{enumerate}
    We write \emph{$\sigma$-$\calY$-linked} when $\mu=\aleph_0$.
\end{definition}

\begin{example}\label{exm:famlk}
    \ 
    \begin{enumerate}[label = \normalfont (\arabic*)]
        \item\label{famsing} Any singleton is $(\Xi,\bar I,\varp)$-linked: If $Q$ is a singleton, $Q^W$ only contains one constant sequence, so $\lim \bar p$ can be defined as this constant value. Notice that $\lim \bar p$ forces that the integral of \autoref{intext} is $1$ for any $\dot \Xi'$ extending $\Xi$.      
        Hence, any poset $\Por$ is uniformly $|\Por|$-$\calY_*$-linked. In particular, Cohen forcing is uniformly $\sigma$-$\calY_*$-linked.

        \item\label{randomfam} Shelah~\cite{ShCov} proved, implicitly, that random forcing is uniformly $\sigma$-$\calY_{**}$-linked, where $\calY_{**}$ is the class of all $(\Xi,\bar I)\in \calY_*$ such that $\Xi$ is free.  
        More generally, we proved that any measure algebra with Maharam type $\mu$ is uniformly $\mu$-$\calY_{**}$-linked~\cite{MUrandom}.
        
        \item\label{famlkE} With Cardona and Uribe-Zapata we have proved that $\sigma$-$\calY_*$-linked posets do not increase $\non(\Ewf)$, where $\Ewf$ denotes the ideal generated by the $F_\sigma$ measure zero subsets of $2^\omega$, and that $\Ebb$ (the standard ccc poset adding an eventually different real) and localization posets increase $\non(\Ewf)$ (see~\cite{CardonaRIMS,CMlocalc}). As a consequence, $\Ebb$ and the localization posets cannot be $\sigma$-$\calY_*$-linked.
    \end{enumerate}
\end{example}

The following is the main result of this section.

\begin{theorem}\label{mainEtld}
   Under \autoref{hyplog}, the poset $\Etld$ is uniformly $\sigma$-$\calY_*$-linked.
\end{theorem}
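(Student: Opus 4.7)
The plan is to adapt the argument of~\cite[Lem.~1.20]{KST} to the tree-creature framework of \autoref{sec:tree}. First I define the witnesses: for each $s\in T_*$ and $\varp\in(0,1)\cap\Q$, let $Q_{s,\varp}\subseteq\Etld$ be the set of conditions $p$ with $\stem(p)=s$ and $\|p\|_t\geq 1+\frac{1}{|s|}+h_\varp(|t|)$ for every $t\in p$ above $s$, where $h_\varp\colon\omega\to(0,\infty)$ tends to $0$ and is large enough at each $n$ to absorb the norm loss produced at level $n$ in the forthcoming limit construction. Concretely, if $\seq{\varp_n}{n<\omega}$ is a positive sequence with $\sum_n\varp_n$ sufficiently small, one may take $h_\varp(n)=\log(1/\varp_n)/\min_{t\in\Lev_n(T_*)}\log b_*(t)$, which tends to $0$ by \autoref{hyplog}. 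Density of $\bigcup_{s,\varp}Q_{s,\varp}$ will follow as in \autoref{klinked}\ref{klinkedb}, by passing from $p$ to $p|^{s'}$ for $s'\in p$ of length so large that the remaining slack absorbs $\sup_{n\geq|s'|}h_\varp(n)$.

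Next I construct the limits. Fix $(\Xi,\bar I)\in\calY_*$ with $\bar I=\seq{I_k}{k\in K}$ a partition of $W$, and $\bar p=\seq{p_\ell}{\ell\in W}\in Q_{s,\varp}^W$. Build $q=\lim\bar p$ level-by-level starting from $\stem(q)=s$: at each $t\in\Lev_n(q)$ with $n\geq|s|$, let $A_\ell=\suc_{p_\ell}(t)$ when $t\in p_\ell$ and $A_\ell=\suc_{T_*}(t)$ otherwise, so that $\|A_\ell\|_t\geq 1+\frac{1}{|s|}+h_\varp(n)$. The sequence $\seq{A_\ell}{\ell\in W}$ takes only finitely many values in $\pts(\suc_{T_*}(t))$ and, together with $\bar I$, induces a finite partition $P_t$ of $K$ via $k\mapsto(|I_k\cap C|/|I_k|)_C$. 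Applying \autoref{ubound} with tolerance $\varp_n/2$ yields a finite $u_t\subseteq K$ of uniformly bounded size; then defining weights $g_t(\ell)=\frac{1}{|u_t|\cdot|I_{k(\ell)}|}$ when $k(\ell)\in u_t$ (where $k(\ell)$ is the unique $k$ with $\ell\in I_k$) and $0$ otherwise, one checks $\sum_\ell g_t(\ell)=1$ and sets $\suc_q(t):=B_{\bar A,g_t}(\varp_n)$. By \autoref{normCmprop}\ref{norm2}, $\|q\|_t\geq 1+\frac{1}{|s|}+h_\varp(n)-\log_{b_*(t)}(1/\varp_n)\geq 1+\frac{1}{|s|}$, so $q\in\Etld$.

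Finally, following~\cite{CMU,CMUP}, I take $\dot\Xi'$ to be an $\Etld$-name, depending only on $(\Xi,\bar I)$, for a fam with the uap extending $\Xi$ on a field containing the sets relevant to~\eqref{intext}. To verify the integral bound, let $r$ be the generic branch of $q$; at each level $n$, the construction of $\suc_q(r\restriction n)$ ensures that the $g_t$-mass of $\ell$ with $r(n)\notin A_\ell$ is ${<}\varp_n$. Unfolding $g_t$, this says the $u_t$-average of $|\set{\ell\in I_k}{r(n)\notin A_\ell}|/|I_k|$ is ${<}\varp_n$; the uap then transfers this to a $\Xi$-average up to an additive $\varp_n/2$. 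Summing over $n$ and using the uniform extension $\dot\Xi'$ gives $\int_K|\set{\ell\in I_k}{r\notin[p_\ell]}|/|I_k|\,d\dot\Xi'<\sum_n\tfrac{3}{2}\varp_n$, which can be made ${<}\varp$ by the initial choice of $\seq{\varp_n}$, yielding~\eqref{intext}. The hard part will be the delicate coordination between the co-bigness bound of \autoref{normCmprop}\ref{norm2}, the uap of \autoref{ubound}, and the uniform definition of $\dot\Xi'$ as an extension of $\Xi$ compatible with the $u_t$-averaging used in the construction of $q$; \autoref{hyplog} is the essential hypothesis, since it is what makes each level's logarithmic norm loss small enough to fit within the reserved slack $h_\varp(|t|)$.
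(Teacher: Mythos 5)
Your ingredients are the right ones---the reserved logarithmic slack $h_\varp$ justified by \autoref{hyplog} (essentially the paper's $E'_s$ with slack $4\log_{b_*(t)}|t|$), and the homogenization of successor sets via co-bigness with weights extracted from $\Xi$ through the uap---but the architecture of the argument has two genuine gaps. First, your limit homogenizes in a single pass over all of $W$ with level-dependent weights $g_t$ supported on a level-dependent finite sample $u_t\subseteq K$. This does not telescope: membership of the generic branch in $B_{\bar A,g_t}(\varp_n)$ only controls the blocks $I_k$ with $k\in u_t$; a block with $k\notin u_t$ is invisible at level $n$; and since the $u_t$ vary with $t$, there is no fixed index set along which the surviving fraction $|\set{\ell\in I_k}{r\frestr n\in p_\ell}|/|I_k|$ decreases multiplicatively. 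The paper avoids this with a two-layer construction: a \emph{pseudo-fusion} $r^*_k$ of each block $\seq{p_\ell}{\ell\in I_k}$ (\autoref{psf}), built with the \emph{uniform} weights $1/|J_t|$ on the shrinking set $J_t$ of surviving indices so that $|J_t|/|I_k|\geq\delta_{|t|}$ telescopes, combined with the $(|I|{+}1)$-linkedness from \autoref{klinked} to actually produce a single $q'$ below a $\bigl(1-\frac{1}{|s|}\bigr)$-fraction of the $p_\ell$; only then is a limit taken \emph{across} blocks, weighted by $\Xi$. Nothing in your construction yields the key density statement \autoref{psf}~\ref{bul2}, and without it one cannot pass from ``$r(n)$ lies in most $A_\ell$'' to a single condition lying below most $p_\ell$.

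Second, your verification of \autoref{intext} both asserts the existence of the uniform name $\dot\Xi'$ and then bounds the integral by summing per-level losses over all $n$. The latter step needs $\int_K\sum_n f_n\,d\dot\Xi'\leq\sum_n\int_K f_n\,d\dot\Xi'$ for nonnegative $f_n$, which fails for finitely additive measures (take $K=\omega$, $f_n$ the indicator of $\{n\}$ and $\dot\Xi'$ free: the left side is $1$, the right side is $0$). The existence of $\dot\Xi'$ with the prescribed integral values is exactly the content to be proved and cannot be taken as input; the paper obtains it from \autoref{t85}, whose ground-model reformulation \autoref{char}~\ref{char2} reduces the whole problem to: densely below the limit, find $q'$ and a finite $u\subseteq K$ realizing the approximations~\ref{-fm1} and~\ref{-fm2}. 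That is the statement your argument would need to establish, and it is precisely where the pseudo-fusion and the linkedness are used. (A smaller, fixable point: your $P_t$ is not a finite partition of $K$ when the $|I_k|$ are unbounded, since the tuples of fractions take infinitely many values; one must discretize before invoking \autoref{ubound}.)
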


Before proving the theorem, we review some results about fams. Given a fam $\Xi$ on a field of sets $\Bor$ over $K$, and a bounded function $\Xi\colon \Bor\to \R$, we can define the \emph{$\Xi$-integral of $f$}, denoted $\int_K f d\Xi$, similarly as the Riemann integral (using $\Pbf^\Xi$, lower sums and upper sums), and we say that $f$ is \emph{$\Xi$-integrable} when its $\Xi$-integral is defined. When $\Bor = \pts(K)$, any bounded real-valued function on $K$ is $\Xi$-integrable. For details, see~\cite{Uribethesis,CMUP}.

\begin{theorem}[{\cite{CMUP}}]\label{t85}
    Let $\Xi_0$ be a fam with the uap on a field of sets $\Bor$ over $K$, and let 
    $I$ be a set. For each $i\in I$, let $C_i$ be a closed subset of $\R$ and $f_i\colon K\to\R$ bounded.  
   Then, the following statements are equivalent.
   \begin{enumerate}[label = \normalfont (\Roman*)]
   \item\label{t85I} For any $P\in\Pbf^{\Xi_0}$, $\varepsilon>0$, any finite set $J\subseteq I$, and any open $G_i\subseteq \R$ containing $C_i$ for $i\in J$, there is some non-empty finite $u\subseteq K$ such that:
   \begin{enumerate}[label=\normalfont(\roman*)]
       \item\label{t85a} $\displaystyle \left |\frac{|B\cap u|}{|u|} - \Xi_0(B) \right | <\varepsilon$ for any $B\in P$, and
       
       \item\label{t85b} $\displaystyle \frac{1}{|u|}\sum_{k\in u}f_i(k) \in  G_i$ for any $i\in J$.
   \end{enumerate}
   \item\label{t85II} There is some fam $\Xi$ on $\pts(K)$ with the uap extending $\Xi_0$ such that, for any $i \in I$, $\displaystyle \int_{K} f_id \Xi \in C_i$.
   \end{enumerate}
\end{theorem}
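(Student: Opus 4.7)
My plan is to prove the two directions separately, with (I)$\Rightarrow$(II) being the harder one, handled by a compactness argument.

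For (II)$\Rightarrow$(I), I would apply the uap of the given extension $\Xi$ to a suitably refined partition. Given $P\in\Pbf^{\Xi_0}$, $\varepsilon>0$, a finite $J\subseteq I$, and open $G_i\supseteq C_i$ for $i\in J$, pick $\delta_i>0$ so that the $\delta_i$-neighborhood of $C_i$ is contained in $G_i$, partition $\R$ into intervals of length less than $\delta_i$, and pull back via $f_i$ to obtain a finite partition $Q_i$ of $K$ in $\pts(K)$. Let $P'\in\Pbf^\Xi$ be a common refinement of $P$ and the $Q_i$ for $i\in J$. The uap of $\Xi$ applied to $P'$ with a sufficiently small tolerance $\varepsilon'$ yields a finite $u\subseteq K$ with $\bigl|\tfrac{|u\cap B'|}{|u|}-\Xi(B')\bigr|<\varepsilon'$ for every $B'\in P'$. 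Aggregating over $P$ gives clause (i), while summing $f_i$-values along $Q_i$ shows that $\tfrac{1}{|u|}\sum_{k\in u}f_i(k)$ is a discrete Riemann sum approximating $\int f_i\,d\Xi\in C_i$ to within $O(\delta_i)$, which therefore lies in $G_i$, giving clause (ii).

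For (I)$\Rightarrow$(II), I argue by compactness in $[0,1]^{\pts(K)}$ with the product topology. Direct the set $\Dbf$ of admissible data $d=(P,\varepsilon,J,(G_i)_{i\in J})$ by refinement (finer $P$, larger $J$, smaller $\varepsilon$, smaller $G_i$), and for each $d$ invoke (I) to choose a finite $u_d\subseteq K$. Define the uniform probability fam $\Xi_d(A):=|A\cap u_d|/|u_d|$ on $\pts(K)$. By Tychonoff compactness some subnet of $(\Xi_d)_{d\in\Dbf}$ converges pointwise to a probability fam $\Xi$ on $\pts(K)$; the approximation condition forces $\Xi\frestr\Bor=\Xi_0$, so $\Xi$ extends $\Xi_0$. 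For each $i\in I$, along the refining subnet the value $\tfrac{1}{|u_d|}\sum_{k\in u_d}f_i(k)=\int f_i\,d\Xi_d$ is eventually inside every open neighborhood of $C_i$, and continuity of the Riemann integral under pointwise convergence of fams (proved by reducing to finitely many $\Xi$-values via level-set partitions of the $f_i$) transfers this to $\int f_i\,d\Xi\in C_i$. Finally, the uap of $\Xi$ is inherited from the $\Xi_d$: given any $P\in\Pbf^\Xi$ and $\varp>0$, pick $d$ in the subnet with $|\Xi_d(B)-\Xi(B)|<\varp$ for all $B\in P$; then $u_d$ itself witnesses the uap of $\Xi$ on $P$ with tolerance $\varp$.

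The main obstacle is the continuity of the $\Xi$-integral under pointwise convergence of fams, which must be established by refining level-set partitions so the integral becomes determined up to arbitrary precision by finitely many $\Xi$-values on $\pts(K)$. A related subtlety is the atomic case in which $\Xi_0$ assigns positive measure to some finite set: the characterization cited after \autoref{freeuap} forces $\Xi_0$ to take values in $\{0,1/d,\ldots,1\}$ for a fixed $d$, and the cluster-point construction must be arranged so that the extension $\Xi$ respects this rational granularity, which in turn is necessary for $\Xi$ to have the uap.
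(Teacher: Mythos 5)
The paper does not actually prove \autoref{t85}; it is quoted from the reference [CMUP] without proof, so there is nothing in the source to compare your argument against. Judged on its own, your overall strategy is the natural one and is essentially sound: for (II)$\Rightarrow$(I), approximate the integral by a Riemann sum over a common refinement of $P$ with level-set partitions of the $f_i$ and invoke the uap of $\Xi$; for (I)$\Rightarrow$(II), take the uniform counting measures $\Xi_d(A)=|A\cap u_d|/|u_d|$ on $\pts(K)$ and pass to a pointwise-convergent subnet by compactness of $[0,1]^{\pts(K)}$. Your observation that the uap of the limit $\Xi$ is inherited directly from the $\Xi_d$ (choose $d$ with $\Xi_d$ close to $\Xi$ on the finitely many cells of a given $P\in\Pbf^\Xi$, and reuse $u_d$) is clean and correct, and it makes your closing worry about the atomic/rational-granularity case superfluous: that discussion is not needed for your own argument to go through.

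Two technical points do need repair. First, in (II)$\Rightarrow$(I) you choose $\delta_i$ so that the $\delta_i$-neighborhood of $C_i$ lies in $G_i$; for an unbounded closed set no such uniform $\delta_i$ need exist (take $C=\N$ and $G=\bigcup_n(n-\tfrac1n,n+\tfrac1n)$). You only need a $\delta_i$-ball around the single point $\int_K f_i\,d\Xi\in C_i$, which exists since $G_i$ is open, so state it that way. Second, and more substantively, your directed order ``finer $P$, smaller $\varepsilon$'' does not by itself yield that $\Xi_d(B)\to\Xi_0(B)$ for a fixed $B\in\Bor$: if $d\geq d_0$ has partition $P_d$ refining $P_{d_0}\ni B$ with tolerance $\varepsilon_d\leq\varepsilon_{d_0}$, the guarantee from (I) controls each cell of $P_d$ separately, and summing over the cells of $P_d$ contained in $B$ only bounds the error by $|P_d|\varepsilon_d$, which is not controlled since $|P_d|$ is unbounded along the net. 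The same aggregation problem arises when you claim a witness for a finer datum also witnesses a coarser one. The standard fix is to order the data so that $d'\geq d$ forces $\varepsilon_{d'}\cdot|P_{d'}|\leq\varepsilon_d$ (or, equivalently, to declare $d'\geq d$ exactly when every $u$ witnessing (I) for $d'$ also witnesses it for $d$, and check directedness by taking the common refinement with tolerance divided by its size). With that adjustment the eventuality arguments, the identity $\Xi\frestr\Bor=\Xi_0$, and the convergence $\int f_i\,d\Xi_d\to\int f_i\,d\Xi$ all go through as you describe.
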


The previous result allows the following characterization.

\begin{theorem}[{cf.~\cite{Uribethesis,CMU}}]\label{char}
 Let $\mu$ be a cardinal, $\Por$ a poset, $\seq{Q_{\alpha,\varp}}{\alpha<\mu,\ \varp\in(0,1)\cap\Q}$ a sequence of subsets of $\Por$, $\calY\subseteq\calY_*$ and, for each $(\Xi,\bar I)\in \calY$, $\alpha<\mu$ and $\varp\in(0,1)\cap\Q$, ${\lim}^{\Xi,\bar I}\colon Q^W_{\alpha,\varp}\to \Por$ where $W:=\bigcup_{k\in K} I_k$. Then, the following statements are equivalent.
\begin{enumerate}[label=\normalfont (\Roman*)]
\item\label{char1} $\Por$ is uniformly $\mu$-$\calY$-linked witnessed by $\seq{Q_{\alpha,\varp}}{\alpha<\mu,\ \varp\in(0,1)\cap\Q}$.
\item\label{char2} \autoref{def:famlim}~\ref{faml2}~\ref{faml2d} holds and, for any
    \begin{multicols}{2} 
    \begin{itemize}
        \item $(\Xi,\bar I)\in\calY$,
        \item $i^*<\omega$,
        \item $(\alpha_i,\varp_i)\in \mu\times ((0,1)\cap \Q)$,
        \item $\bar r^i = \seq{r^i_\ell}{\ell\in W} \in Q_{\alpha_i,\varp_i}^W$ for $i<i^*$,
        \item $P\in \Pbf^\Xi$,
        \item $\varp'>0$, and
        \item $q\in \Por$ stronger than $\lim^{\Xi,\bar I} \bar r^i$ for all $i<i^*$,
    \end{itemize}
    \end{multicols}
    there is some $q'\leq q$ in $\Por$ and $u\subseteq K$ finite non-empty such that
    \begin{enumerate}[label = \normalfont (\arabic*)]
        \item\label{-fm1} $\displaystyle \left| \frac{|u\cap B|}{|u|} - \Xi(B)\right| < \varp'$ for all $B\in P$, and
        \item\label{-fm2} $\displaystyle \frac{1}{|u|}\sum_{k\in u}\frac{|\set{\ell\in I_k}{q' \leq r^i_\ell}|}{|I_k|} > 1-\varp_i-\varp'$ for all $i<i^*$.
    \end{enumerate}
\end{enumerate}
\end{theorem}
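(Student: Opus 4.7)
The plan is to use \autoref{t85} as a bridge between the forcing statement defining uniform linking in \ref{char1} and the ground-model combinatorial condition \ref{char2}.

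For \ref{char1}$\Rightarrow$\ref{char2}: the density clause is immediate, so fix the remaining data of \ref{char2}. Since $\Por$ is uniformly linked by a single $\Por$-name $\dot\Xi'$ depending only on $(\Xi,\bar I)$, for every $i<i^*$, $\lim^{\Xi,\bar I}\bar r^i\Vdash \int_K g^i\,d\dot\Xi'\geq 1-\varp_i$, where $g^i(k):=\frac{|\set{\ell\in I_k}{r^i_\ell\in\dot G}|}{|I_k|}$. Hence $q$ forces all these inequalities simultaneously. Because $\dot\Xi'$ is forced to have the uap and to extend $\Xi$, refining $P$ by level-sets of a finite partition of $[0,1]$ that approximates the $g^i$'s from below by simple functions and invoking the uap on the refinement produces, for the given $\varp'$, a $\Por$-name $\dot u$ for a finite non-empty subset of $K$ forced to satisfy $\left|\frac{|\dot u\cap B|}{|\dot u|}-\Xi(B)\right|<\varp'$ for $B\in P$ and $\frac{1}{|\dot u|}\sum_{k\in\dot u}g^i(k)>1-\varp_i-\varp'$. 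Strengthen $q$ to $q''$ deciding $\dot u = u\in V$, then to $q'\leq q''$ deciding the statement $r^i_\ell\in\dot G$ for each (finite tuple) $k\in u$, $\ell\in I_k$, $i<i^*$. Whenever $q'\Vdash r^i_\ell\in\dot G$ we must have $q'\leq r^i_\ell$, so the value of $g^i(k)$ forced by $q'$ is at most $\frac{|\set{\ell\in I_k}{q'\leq r^i_\ell}|}{|I_k|}$, giving \ref{-fm2}; condition \ref{-fm1} lives in $V$ already.

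For \ref{char2}$\Rightarrow$\ref{char1}: fix $(\Xi,\bar I)\in\calY$; we build a single name $\dot\Xi'$ witnessing uniform linking for every $Q_{\alpha,\varp}$. Work in an arbitrary generic extension $V[G]$, and let $\Rbf$ be the set of triples $(\alpha,\varp,\bar r)$ with $\bar r\in Q_{\alpha,\varp}^W$ and $\lim^{\Xi,\bar I}\bar r\in G$. For each such triple set $C_{(\alpha,\varp,\bar r)}:=[1-\varp,\infty)$ and $f_{(\alpha,\varp,\bar r)}:=g^{\bar r}$. To apply \autoref{t85}~\ref{t85II}, verify \ref{t85I} in $V[G]$: fix $P\in\Pbf^\Xi$, $\varp'>0$, a finite $J\subseteq\Rbf$, and open sets $G_i\supseteq[1-\varp_i,\infty)$; shrinking $\varp'$ we may assume $(1-\varp_i-\varp',\infty)\subseteq G_i$ for all $i\in J$. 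Pick $q\in G$ below every $\lim^{\Xi,\bar I}\bar r^i$, $i\in J$. By applying \ref{char2} with any $q_0\leq q$ in place of $q$, the collection of $q'\leq q$ admitting a witness $u$ of \ref{-fm1} and \ref{-fm2} for this data is dense below $q$, hence meets $G$. Choose such a $q'\in G$ with its $u$: then \ref{-fm1} gives \ref{t85a}, and since $q'\in G$ together with $q'\leq r^i_\ell$ implies $r^i_\ell\in\dot G$, we have $g^{\bar r^i}(k)\geq\frac{|\set{\ell\in I_k}{q'\leq r^i_\ell}|}{|I_k|}$, so \ref{-fm2} gives $\frac{1}{|u|}\sum_{k\in u}g^{\bar r^i}(k)>1-\varp_i-\varp'$, placing the sum in $G_i$. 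Thus \autoref{t85} yields a fam $\Xi'$ on $\pts(K)$ with uap extending $\Xi$ such that $\int g^{\bar r}\,d\Xi'\geq 1-\varp$ for every $(\alpha,\varp,\bar r)\in\Rbf$; let $\dot\Xi'$ be a canonical name for such a $\Xi'$ (via the Maximal Principle). For any $\bar p\in Q_{\alpha,\varp}^W$, $\lim^{\Xi,\bar I}\bar p$ forces $(\alpha,\varp,\bar p)\in\Rbf$, and hence forces the required integral bound.

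The main difficulty is the two-way translation of quantifiers. In \ref{char1}$\Rightarrow$\ref{char2} one must strip the name $\dot\Xi'$ from the forced integral inequality by pulling the witness $u$ into the ground model and absorbing the fam's granularity into $\varp'$. In \ref{char2}$\Rightarrow$\ref{char1} the subtle step is invoking \autoref{t85} inside $V[G]$ on the $G$-dependent index set $\Rbf$, which is legitimised by the density argument so that the combinatorial witnesses required for \ref{t85I} are always present in the generic.
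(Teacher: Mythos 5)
Your argument is essentially the paper's: both directions are routed through \autoref{t85} (in \ref{char1}$\Rightarrow$\ref{char2} you effectively re-derive its \ref{t85II}$\Rightarrow$\ref{t85I} direction by hand via the uap on a refined partition, where the paper simply cites it), and your set $\Rbf$ and the density argument in \ref{char2}$\Rightarrow$\ref{char1} coincide with the paper's $\Omega$.

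One step deserves a correction. In \ref{char1}$\Rightarrow$\ref{char2} you pass to a $q'$ that merely \emph{decides} the statements $r^i_\ell\in\dot G$ and then assert that $q'\Vdash r^i_\ell\in\dot G$ implies $q'\leq r^i_\ell$. For an arbitrary poset this is false: $q'\Vdash r^i_\ell\in\dot G$ only says that every extension of $q'$ is compatible with $r^i_\ell$, which need not give $q'\leq r^i_\ell$ unless $\Por$ is separative, and item \ref{-fm2} is literally about the set $\set{\ell\in I_k}{q'\leq r^i_\ell}$, not about forced membership in $\dot G$. The paper avoids this by strengthening $q'$ so that, for each of the finitely many relevant pairs, either $q'\leq r^i_\ell$ or $q'\perp r^i_\ell$; then $q'\Vdash r^i_\ell\in\dot G$ does force $q'\leq r^i_\ell$ and the count transfers. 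With that one-line repair your proof is complete and matches the paper's.
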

\begin{proof}
\ref{char1}${}\Rightarrow{}$\ref{char2}: Assume~\ref{char1} and the assumptions of~\ref{char2}. Let $\dot\Xi'$ be as in \autoref{def:famlim}~\ref{faml3}, and let $G$ be $\Por$-generic over $V$ such that $q\in G$, and $\Xi':=\dot\Xi'[G]$. In $V[G]$, since $q\leq \lim^{\Xi,\bar I}\bar r^i$ for all $i<i^*$, by \autoref{intext}
\[\int_K f_i d\Xi' \geq 1-\varp_i, \text{ where }f_i(k):=\frac{|\set{\ell \in I_k}{r^i_\ell \in \dot G}|}{|I_k|}.\]
Then, by \autoref{t85}, there is some non-empty finite $u\subseteq K$ such that
\begin{enumerate}[label=\normalfont(\roman*)]
       \item $\displaystyle \left |\frac{|B\cap u|}{|u|} - \Xi'(B) \right | <\varepsilon'$ for any $B\in P$, and
       
       \item $\displaystyle \frac{1}{|u|}\sum_{k\in u}f_i(k) > 1-\varp_i -\varp'$ for any $i<i^*$.
   \end{enumerate}
   Since $\Xi'(B)=\Xi(B)$ for all $B\in\pts(K)\cap V$, we obtain~\ref{-fm1}.
   
   Back in $V$, find $q'\leq q$ forcing the above and such that either $q'\leq r^i_\ell$ or $q'\perp r^i_\ell$ for all $i<i^*$ and $\ell \in \bigcup_{k\in u}I_k$. Then, $q'$ decides the value of $f_i(k)$ for all $i<i^*$ and $k\in u$, even more, $q'$ forces
   \[f_i(k) = \frac{\set{\ell\in I_k}{q'\leq r^i_\ell}}{|I_k|}.\]
   Then,~\ref{-fm2} follows.
   
   \ref{char2}${}\Rightarrow{}$\ref{char1}: Assume~\ref{char2} and $(\Xi,\bar I)\in \calY$. Let $G$ be $\Por$-generic over $V$ and work in $V[G]$. Consider the set
   \[\Omega := \lset{(\alpha,\varp,\bar r)}{\alpha<\mu,\ \varp\in(0,1)\cap\Q,\ \bar r\in Q^W_{\alpha,\varp}\cap V,\ {\lim}^{\Xi,\bar I}\bar r \in G}.\]
   
   For each $\alpha<\mu$, $\varp\in(0,1)\cap \Q$ and $\bar r\in Q^W_{\alpha,\varp}\cap V$, define
   \[f_{\alpha,\varp,\bar r}(k):= \frac{\set{\ell \in I_k}{r_\ell\in \dot G}}{|I_k|},\qquad K_{\alpha,\varp,\bar r}:=[1-\varp,\infty).\]
   To prove~\ref{char1}, it is enough to check~\autoref{t85}~\ref{t85I} for $\Omega$. Indeed, assume
   \begin{multicols}{2} 
    \begin{itemize}
        \item $i^*<\omega$,
        \item $(\alpha_i,\varp_i,\bar r^i)\in \Omega$ for $i<i^*$,
        \item $P\in \Pbf^\Xi$, and
        \item $\varp'>0$.
    \end{itemize}
    \end{multicols}
    Back in $V$, let $q\in \Por$ be a condition forcing the above such that, wlog, $q\leq \lim^{\Xi,\bar I}\bar r^i$ for all $i<i^*$. Then, by~\ref{char2}, there exists some $q'\leq q$ in $\Por$ and a finite non-empty $u\subseteq K$ satisfying~\ref{-fm1} and~\ref{-fm2}. This density argument allows to find such a $q'$ in $G$. Therefore, in $V[G]$, for any $i<i^*$,
    \[\frac{1}{|u|}\sum_{k\in u}f_{\alpha_i,\varp_i,\bar r^i}(k) \geq \frac{1}{|u|}\sum_{k\in u}\frac{|\set{\ell\in I_k}{q' \leq r^i_\ell}|}{|I_k|} > 1-\varp_i-\varp'. \qedhere\]
\end{proof}

We now proceed to prove \autoref{mainEtld}. Some preparation is needed to define the correct limit function and the witness for the uniform $\sigma$-$\calY_*$-linkedness.

\begin{definition}\label{E's}
For $s\in T_*\menos\{\la\ \ra\}$, define
    \begin{align*}
       E'_s & := \lset{p\in \Etld}{\stem(p) = s \text{ and }\|p\|_t \geq 1+\frac{1}{|s|} + 4 \log_{b_*(t)}|t| \text{ for all $t\in p$ above $s$}},\\
       E' & := \bigcup_{s\in T_*\menos\{\la\ \ra\}}E'_s.
	 \end{align*}
	 Note that,  
	 by \autoref{hyplog}, $E'$ is dense in $\Etld$ (which can be proved similarly as \autoref{klinked}).
	 \end{definition}

In the following lemma, we show how to homogenize finitely many conditions in $\Etld$.

\begin{lemma}[{cf.~\cite[Lem.1.20, Step~1]{KST}}]\label{psf}
Let $s\in T_*\menos\{\la\ \ra\}$, $I$ a finite set and $\set{r_\ell}{\ell\in I}\subseteq E'_{s}$. Then there is a condition $r^*\in\Etld$ with stem $s$ such that
\begin{enumerate}[label = \normalfont (\roman*)]
        \item\label{bul1} $\displaystyle \|\suc_{r^*}(t)\|\geq 1+\frac{1}{|s|} + 2 \log_{b_*(t)}|t|$ for any $t\in r^*$ above $s$, and

        \item\label{bul2} 
        The set $\displaystyle \lset{r\in \Etld}{\frac{\set{\ell \in I}{r\leq r_\ell}|}{|I|} > 1-\frac{1}{|s|}}$ is dense below $r^*$.
    \end{enumerate}
\end{lemma}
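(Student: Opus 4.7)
The plan is to build $r^*$ by induction on height above $s$ while maintaining for each $t\in r^*$ a \emph{surviving set} $I_t\subseteq I$ with $I_s:=I$ and, by construction, $I_t=\set{\ell\in I}{t\in r_\ell}$. Setting the threshold $N:=\max\{|s|,\lceil\sqrt{|I|}\,\rceil\}$, I split the construction into two phases.

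For \emph{Phase~1} ($|s|\le|t|<N$), I apply the $b_*(t)$-co-bigness (\autoref{normCmprop}\ref{norm2}) to $\la\suc_{r_\ell}(t):\ell\in I_t\ra$ with uniform weights $g(\ell):=1/|I_t|$ and $\varp_t:=1/|t|^2$, declare $\suc_{r^*}(t):=B_{\bar A,g}(\varp_t)$, and for $t'\in\suc_{r^*}(t)$ set $I_{t'}:=\set{\ell\in I_t}{t'\in\suc_{r_\ell}(t)}$. This yields $|I_{t'}|>(1-1/|t|^2)|I_t|$ and, since $r_\ell\in E'_s$ forces $\min_{\ell\in I_t}\|\suc_{r_\ell}(t)\|_t\geq 1+1/|s|+4\log_{b_*(t)}|t|$, the co-bigness loss $\log_{b_*(t)}(1/\varp_t)=2\log_{b_*(t)}|t|$ already produces the norm bound in~\ref{bul1}. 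For \emph{Phase~2} ($|t|\geq N$), I declare $\suc_{r^*}(t):=\bigcap_{\ell\in I_t}\suc_{r_\ell}(t)$ and $I_{t'}:=I_t$; \autoref{lem:mbig} now costs $\log_{b_*(t)}|I_t|\leq\log_{b_*(t)}|I|\leq 2\log_{b_*(t)}|t|$ in norm (using $|t|\geq N\geq\sqrt{|I|}$), giving~\ref{bul1} once more. Since every $\suc_{r^*}(t)$ then has norm $\geq 1$ and singletons have norm $<1$, each contains at least two elements, so $r^*\in\Etld$ with stem $s$.

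For~\ref{bul2}, given $q\leq r^*$ I pick $t^*\in q$ above $\stem(q)$ with $|t^*|\geq N$ (available because $q$ has no maximal nodes) and set $r:=q|^{t^*}\leq q$. For any $\ell\in I_{t^*}$, the ancestors of $t^*$ in $r$ lie in $r_\ell$ since $t^*\in r_\ell$ and $r_\ell$ is a tree, while every descendant $t'>t^*$ in $r^*\supseteq r$ satisfies $I_{t'}=I_{t^*}\ni\ell$ by Phase~2 stability, whence $t'\in r_\ell$; so $r\subseteq r_\ell$ for every $\ell\in I_{t^*}$. Because $I_t$ only shrinks during Phase~1, the telescoping identity $1-1/n^2=(n-1)(n+1)/n^2$ delivers
\[
\frac{|I_{t^*}|}{|I|}\;>\;\prod_{n=|s|}^{N-1}\!\Big(1-\tfrac{1}{n^2}\Big)\;=\;\frac{(|s|-1)\,N}{|s|\,(N-1)}\;>\;\frac{|s|-1}{|s|}\;=\;1-\frac{1}{|s|},
\]
closing~\ref{bul2} (the degenerate case $|s|=1$ reduces to $|I_{t^*}|\geq 1$, which is preserved at every step). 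I expect the main technical obstacle to be the joint calibration of the two phases: $\varp_t=1/|t|^2$ exactly exhausts the $2\log_{b_*(t)}|t|$ slack in Phase~1, and $N\geq\sqrt{|I|}$ exactly exhausts the same slack in Phase~2, so the telescoping gives precisely $1-1/|s|$ with no room to spare.
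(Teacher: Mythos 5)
Your proof is correct, and while your Phase~1 coincides with the paper's construction, your Phase~2 and your proof of~\ref{bul2} take a genuinely different route. The paper runs a single uniform rule at every level $n\geq|s|$: apply co-bigness with $\varp=1/n^2$ (your Phase~1) and keep those successors $t'$ whose surviving set still meets the global threshold $\delta_{n+1}=\bigl(1-\frac{1}{|s|}\bigr)\bigl(1+\frac{1}{n}\bigr)$, which is exactly the telescoping product you compute. Since the surviving sets may keep shrinking branch by branch forever, the paper's density argument for~\ref{bul2} requires an amalgamation step: given $r\leq r^*$, restrict to a node $t$ of length ${\geq}|I|+1$ with $r|^t\in E_{t,|I|+1}$ and use the $(|I|+1)$-linkedness from \autoref{klinked} to find a common extension of $r$ and the conditions $r_\ell|^t$ for the surviving $\ell$. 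Your Phase~2 removes the need for this: once $|t|\geq N\geq\sqrt{|I|}$ you can afford the full intersection $\bigcap_{\ell\in I_t}\suc_{r_\ell}(t)$ (the loss $\log_{b_*(t)}|I_t|\leq 2\log_{b_*(t)}|t|$ stays within the $4\log_{b_*(t)}|t|$ budget of $E'_s$), which freezes $I_t$ and makes every node of $r^*$ above a Phase~2 node lie in $r_\ell$ for all surviving $\ell$; hence $q|^{t^*}$ itself witnesses density, with no linkedness argument. The trade-off is that your pseudo-fusion depends on $|I|$ through the threshold $N$ (harmless for the applications in \autoref{fam-lim1} and \autoref{uf-lim1}, which only use~\ref{bul1} and~\ref{bul2}), whereas the paper's rule is level-uniform. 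Two cosmetic points: when $N=|s|$ your displayed strict inequality $|I_{t^*}|/|I|>\prod(\cdots)=1$ should be an equality (the conclusion $>1-\frac{1}{|s|}$ still holds since $I\neq\emptyset$ there), and the nonemptiness of $I_t$ at each step, which you note for $|s|=1$, is indeed what licenses applying co-bigness and \autoref{lem:mbig} throughout.
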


The condition $r^*$ constructed in the proof is called the \emph{pseudo-fusion of $\set{r_\ell}{\ell\in I}$}.

\begin{proof}
For $n\geq |s|$, denote
    \[\delta_{n}:=\left\{
       \begin{array}{ll}
           \Big(1-\frac{1}{|s|}\Big)\Big(1+\frac{1}{n-1}\Big) & \text{if $n>|s|$,} \\[2ex]
           1 & \text{if $n=|s|$.}
       \end{array}
    \right.\]

For $t\in T_*$ above $s$ and $n\geq |s|$, define 
    \begin{align*}
        J_{t} & :=\set{\ell\in I}{t\in r_\ell},\\
        L_{n} & :=\lset{t\in T_*}{|t|=n,\ \frac{|J_{t}|}{|I|}\geq \delta_{n}}.
    \end{align*}
    We define $r^*$ by recursion on the height such that its $n$-th level is contained in $L_n$ for $n\geq |s|$: up to level $|s|$, $r^*$ is determined by $s$ (i.e. just the stem); and, when $n\geq |s|$ and $r^*$ is defined up to height $n$, for each $t\in r^*$ at level $n$ we set $\suc_{r^*}(t) := \suc_{T_*}(t)\cap L_{n+1}$.     
    Apply $b_*(t)$-co-bigness to $\seq{\suc_{r_\ell}(t)}{\ell\in J_t}$ and $\ell\in J_t\mapsto \frac{1}{|J_t|}$ to obtain
    \begin{align*}
       \left\|\lset{t'\in\suc_{T_*}(t)}{|J_{t'}| \geq |J_{t}|\left(1-\frac{1}{n^2}\right)}\right\|_t & \geq 1 + \frac{1}{|s|} + 4\log_{b_*(t)}|t| - \log_{b_*(t)}n^2\\ 
         &= 1+\frac{1}{|s|} + 2 \log_{b_*(t)}|t|.
    \end{align*}
    Since $t\in L_n$, any $t'$ in this set satisfies
    \[|J_{t'}| \geq |J_t|\left(1-\frac{1}{n^2}\right)\geq |I|\delta_{n}\left(1-\frac{1}{n^2}\right) = |I|\delta_{n+1},\]
    so $t'\in L_{n+1}$, i.e.\ $t'\in \suc_{r^*}(t)$. Therefore, 
    $\|\suc_{r^*}(t)\|_t \geq 1+\frac{1}{|s|} + 2 \log_{b_*(t)}|t|$ which establishes~\ref{bul1}.

    We know show~\ref{bul2}. 
    Let $r\leq r^*$. By strengthening $r$ if necessary, by \autoref{klinked} we may assume that $r\in E_{t,|I|+1}$ for some $t\in r_*$ above $s$ of length ${\geq}|I|+1$. For $\ell\in J_t$, $t\in r_\ell$ and $r_\ell\in E'_s$, so $r_\ell|^t \in E_{t,|I|+1}$. Then there is some $r'\in \Etld$ stronger than $r$ and $r_\ell$ for all $\ell \in J_t$ because $E_{t,|I|+1}$ is $(|I|+1)$-linked. On the other hand, $t\in L_{|t|}$, so $\frac{|J_t|}{|I|}\geq \delta_{|t|}$. Therefore, 
    \[\frac{|\set{\ell \in I}{r'\leq r_\ell}|}{|I|} \geq \frac{|J_t|}{|I|}\geq  \delta_{|t|} > 1-\frac{1}{|s|}. \qedhere\]
\end{proof}

Using the previous result, we show how to define fam-limits on $\Etld$.

\begin{theorem}[{cf.~\cite[Lem.~1.20]{KST}}]\label{fam-lim1}
    Let $\Xi\colon \pts(K)\to [0,1]$ be a fam with the uap and $\bar I = \seq{I_k}{k\in K}$ a partition of a set $W$ into finite sets. Then,   
    for any $s\in T_*\menos\{\la\ \ra\}$ there is a function $\lim^{\Xi,\bar I}\colon (E'_{s})^W\to \Etld$ such that $\lim^{\Xi,\bar I}\bar r$ has stem $s$ for any $\bar r \in (E'_{s})^W$, and satisfying: For any
    \begin{multicols}{2} 
    \begin{itemize}
        \item $i^*<\omega$,
        \item $s_i\in T_*\menos\{\la\ \ra\}$,
        \item $\bar r^i = \seq{r^i_\ell}{\ell\in W} \in (E'_{s_i})^W$ for $i<i^*$,
        \item $P\in \Pbf^\Xi$,
        \item $\varp'>0$, and
        \item $q\in \Etld$ stronger than $\lim^{\Xi,\bar I} \bar r^i$ for all $i<i^*$,
    \end{itemize}
    \end{multicols}
    there is some $q'\leq q$ in $\Etld$ and $u\subseteq K$ finite non-empty such that
    \begin{enumerate}[label = \normalfont (\arabic*)]
        \item\label{fm1} $\displaystyle \left| \frac{|u\cap B|}{|u|} - \Xi(B)\right| < \varp'$ for all $B\in P$, and
        \item\label{fm2} $\displaystyle \frac{1}{|u|}\sum_{k\in u}\frac{|\set{\ell\in I_k}{q' \leq r^i_\ell}|}{|I_k|} > 1-\frac{2}{|s_i|}-\varp'$ for all $i<i^*$.
    \end{enumerate}
\end{theorem}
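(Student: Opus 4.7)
For each $k \in K$, let $r^*_k \in \Etld$ be the pseudo-fusion of $\{r_\ell : \ell \in I_k\}$ given by \autoref{psf}, and write $K_t := \{k \in K : t \in r^*_k\}$; then $K_s = K$ and $\Xi(K_s) = 1$. The plan is to set $\lim^{\Xi, \bar I}\bar r =: r_\infty$ to be the subtree of $T_*$ with stem $s$ given recursively by
\[\suc_{r_\infty}(t) := \{t' \in \suc_{T_*}(t) : \Xi(K_{t'}) \geq \Xi(K_t)(1 - 1/|t|^2)\}.\]
To verify $r_\infty \in \Etld$, I would combine the ``Fubini'' identity $\sum_{t' \in \suc_{T_*}(t)} \Xi(K_{t'}) = \int_K |\suc_{r^*_k}(t)|\, d\Xi$ with the pseudo-fusion norm bound \ref{bul1} of \autoref{psf} to obtain $\int_K |\suc_{r^*_k}(t)|\, d\Xi \geq \Xi(K_t)(1 - b_*(t)^{-\rho_t})|\suc_{T_*}(t)|$, where $\rho_t := 1 + 1/|s| + 2\log_{b_*(t)}|t|$. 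A Markov-type estimate on the complement of $\suc_{r_\infty}(t)$ then yields $|\suc_{r_\infty}(t)|/|\suc_{T_*}(t)| \geq 1 - b_*(t)^{-\rho_t}|t|^2$, hence $\|\suc_{r_\infty}(t)\|_t \geq \rho_t - 2\log_{b_*(t)}|t| = 1 + 1/|s|$. Telescoping the recursive inequality also gives $\Xi(K_t) \geq (1 - 1/|s|)(1 + 1/(|t|-1)) \geq 1 - 1/|s|$ for $t \in r_\infty$ with $|t| \geq |s|$, which is the quantitative input required below.

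For the property, given $q \leq r^i_\infty := \lim^{\Xi, \bar I}\bar r^i$ for $i < i^*$, $P \in \Pbf^\Xi$, and $\varp' > 0$, set $\delta := \varp'/4$. Use \autoref{ubound} to bound by some $M$ the size of any $u$ approximating $\Xi$ within $\delta$ on the partition generated by $P$ and $\{K^i_{t_0} : i < i^*\}$, where $K^i_t := \{k : t \in r^{*,i}_k\}$ and $t_0$ is chosen next. Using \autoref{klinked}~\ref{klinkedb} and \autoref{hyplog}, strengthen $q$ to $q_0 \in E_{t_0, N}$ with $N := i^* M + 2$ and $|t_0|$ large enough so that $\log_{b_*(t)}(i^* M + 1) \leq 2\log_{b_*(t)}|t|$ for every $t$ above $t_0$. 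Pick $u$ of size $\leq M$ witnessing the uap approximation (this already gives~\ref{fm1}), and set $u_i := u \cap K^i_{t_0}$, so that $|u_i|/|u| \geq \Xi(K^i_{t_0}) - \delta \geq (1 - 1/|s_i|) - \delta$. Form $q_1 := q_0 \cap \bigcap_{i < i^*}\bigcap_{k \in u_i} r^{*,i}_k$: applying \autoref{lem:mbig} at each $t$ above $t_0$, the $q_0$-slack $\log_{b_*(t)}N$ and the $r^{*,i}_k$-slack $2\log_{b_*(t)}|t|$ both dominate the intersection loss $\log_{b_*(t)}(i^* M + 1)$, so $q_1 \in \Etld$ with stem $t_0$. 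Since $q_1 \leq r^{*,i}_k$ for every $i$ and $k \in u_i$, by~\ref{bul2} of \autoref{psf} the set $D^i_k := \{r : |\{\ell \in I_k : r \leq r^i_\ell\}|/|I_k| > 1 - 1/|s_i|\}$ is dense below $q_1$; finite intersection of these dense sets yields $q' \leq q_1$ in $\bigcap_{i, k \in u_i} D^i_k$, for which
\[\frac{1}{|u|}\sum_{k \in u}\frac{|\{\ell \in I_k : q' \leq r^i_\ell\}|}{|I_k|} \geq \frac{|u_i|}{|u|}(1 - 1/|s_i|) \geq [(1 - 1/|s_i|) - \delta](1 - 1/|s_i|) \geq 1 - 2/|s_i| - \delta > 1 - 2/|s_i| - \varp',\]
establishing~\ref{fm2}.

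The main obstacle is the delicate calibration of the relative threshold $1 - 1/|t|^2$: the factor $1/|t|^2$ is exactly what makes the Markov norm loss $2\log_{b_*(t)}|t|$ cancel the $2\log_{b_*(t)}|t|$ surplus left by pseudo-fusion, while the telescoped product $\prod_{j=|s|}^{|t|-1}(1 - 1/j^2) = (1 - 1/|s|)(1 + 1/(|t|-1))$ still delivers the cushion $\Xi(K^i_{t_0}) \geq 1 - 1/|s_i|$ needed to meet the threshold $1 - 2/|s_i|$ in~\ref{fm2}. The second tight point is the choice of $|t_0|$ in the verification step: \autoref{hyplog} is essential to make $\log_{b_*(t)}(i^* M + 1)$ negligible next to both $2\log_{b_*(t)}|t|$ and $\log_{b_*(t)}N$, without which $q_1$ would fail to be a condition after intersecting $q_0$ with up to $i^* M$ pseudo-fusions at once.
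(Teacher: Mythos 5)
Your construction of the limit and the verification step follow the paper's argument very closely: the same pseudo-fusions $r^*_k$ per block $I_k$, the same sets $Z_t=K_t$, the same relative threshold $1-1/|t|^2$ with the same telescoping product $\delta_n=(1-\frac{1}{|s|})(1+\frac{1}{n-1})$, the same use of \autoref{ubound} on the field generated by $P$ and the $K^i_{t_0}$, the same intersection with the relevant pseudo-fusions, and the same appeal to \autoref{psf}~\ref{bul2} followed by the product estimate. However, there is one genuine gap: your verification that $r_\infty\in\Etld$ goes through the chain
\[\|\suc_{r^*_k}(t)\|_t\geq\rho_t\ \Longrightarrow\ |\suc_{r^*_k}(t)|\geq\bigl(1-b_*(t)^{-\rho_t}\bigr)|\suc_{T_*}(t)|,\]
a Markov estimate on cardinalities, and then the converse translation from a cardinality lower bound back to a norm lower bound. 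Both translations are instances of \autoref{normchar}/\autoref{normCmprop}~\ref{norm1}, which hold only for the \emph{specific} norm $\|\cdot\|^C_m$ of \autoref{ex:norm}. The theorem is stated for an arbitrary tree-creature frame, where $\|\cdot\|_t$ is merely assumed to be a $b_*(t)$-co-big norm (\autoref{def:mbig}); such a norm need not satisfy any quantitative relation between $\|A\|_t$ and $|A|$, so your Fubini-plus-Markov step is not available in that generality.

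The repair is short and is exactly what the paper does: since $T_*$ is finitely branching, the family $\set{\suc_{r^*_k}(t)}{k\in K_t}$ has finitely many distinct members $a_0,\dots,a_{m-1}$; putting $g(j):=\Xi(\set{k\in K_t}{\suc_{r^*_k}(t)=a_j})/\Xi(K_t)$ one checks that your set $\suc_{r_\infty}(t)$ contains $B_{\bar a,g}(1/|t|^2)$, and $b_*(t)$-co-bigness applied to $\bar a$ and $g$ directly yields $\|\suc_{r_\infty}(t)\|_t\geq\rho_t-\log_{b_*(t)}|t|^2=1+\frac{1}{|s|}$, with no detour through cardinalities (this also handles the reduction from the possibly infinite index set $K_t$ to a finite one, which co-bigness requires). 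Everything else in your argument is sound; the only other point worth tightening is that the uap error $\delta=\varp'/4$ should instead be taken small relative to the number of atoms of the field generated by $P\cup\set{K^i_{t_0}}{i<i^*}$ (at most $|P|2^{i^*}$ many), since each $B\in P$ accumulates the per-atom error over the atoms it contains; this does not affect the bound $M$ from \autoref{ubound} being computable before $t_0$ is chosen.
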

\begin{proof}
Let $s\in T_*$. We first show how to define $\lim^{\Xi,\bar I}\bar r$ for $\bar r = \seq{r_\ell}{\ell\in W}\subseteq (E'_{s})^W$. For each $k\in K$, let $r^*_k$ be the pseudo-fusion of $\seq{r_\ell}{\ell\in I_k}$ as in \autoref{psf}. For each $t\in T_*$ set $Z_t= Z^{\bar r}_t:= \set{k\in K}{t\in r^*_k}$. Note that $Z_s = K$. By recursion on the height, we construct $r^* :=\lim^{\Xi,\bar I}\bar r$ in $\Etld$ with stem $s$ such that, for any $t\in r^*$ above $s$,
$\Xi(Z_t) \geq \delta_{|t|}$, where $\delta_n$ for $n\geq |s|$ is as in the proof of \autoref{psf}.
    Up to $|s|$, $r^*$ is determined by $s$. Let $n\geq|s|$ and assume we have constructed $r^*$ up to height $n$. It is enough to show how $\suc_{r^*}(t)$ is defined for any $t\in r^*$ at level $n$. Enumerate the finite set $\set{\suc_{r^*_k}(t)}{k\in Z_t}$ as $\set{a_j}{j<m}$. Consider the function $g\colon m\to [0,1]$ defined by 
    \[g(j):= \frac{\Xi(\set{k\in Z_t}{\suc_{r^*_k}(t) = a_j})}{\Xi(Z_t)}\]
    Then, by $b_*(t)$-co-bigness, 
    \[\left\|\lset{t'\in \suc_{T_*}(t)}{\frac{\Xi(Z_{t'})}{\Xi(Z_t)} \geq 1-\frac{1}{n^2}}\right\|_t \geq 1+\frac{1}{|s|} + 2 \log_{b_*(t)}|t|- \log_{b_*(t)}n^2 = 1 + \frac{1}{|s|},\]
    so define $\suc_{r^*}(t)$ as this set. On the other hand, for $t'\in \suc_{r^*}(t)$,
    \[\Xi(Z_{t'}) \geq \Xi(Z_t)\left(1-\frac{1}{n^2}\right) \geq \delta_n \left(1-\frac{1}{n^2}\right) = \delta_{n+1}.\]
    It is clear that $r^*\in \Etld$ and $\stem(r^*) = s$.
    
    We now prove that this limit works. Work under the assumption for~\ref{fm1} and~\ref{fm2}. For $i<i^*$ and $k\in K$, let $r^i_k$ be the pseudo-fusion of $\seq{r^i_\ell}{\ell\in I_k}$, $r^i:=\lim^{\Xi,\bar I}\bar r^i$ and $Z^i_t:= Z^{\bar r^i}_t$. So we have that $q\leq r^i$ for all $i<i^*$.

    Let $t\in q$ above the stem. By \autoref{ubound} applied to the atoms of the field generated by $P\cup \set{Z^i_t}{i<i^*}$ (there are at most $|P|2^{i^*}$ many), there is some non-empty finite set $u_t\subseteq W$ satisfying~\ref{fm1} and, for $i<i^*$,
    \[\frac{|Z^i_t\cap u_t|}{|u_t|}> \delta^i_{|t|}-\varp'>1-\frac{1}{|s_i|}-\varp'\]
    where $\delta^i_n$ is as $\delta_n$ in the proof of \autoref{psf} for $s_i$ and $n\geq|s_i|$. 
    Moreover, there is some $M>0$ such that $|u_t|\leq M$ for all $t$ as above (concretely, $M:=M_{\varp',|P|2^{i^*}}$ as in \autoref{ubound}). So pick $t\in q$ of large enough length such that $q|^t \in E_{t, M i^* +1}$, and set $u:=u_t$. Then, by \autoref{klinked}, there is a lower bound $q_1$ of $\{q\}\cup\{r^i_k\colon i<i^*,\ k\in Z^i_t\cap u\}$. By using \autoref{psf}~\ref{bul2} $\left|\bigcup_{i<i^*}Z^i_t\cap u\right|$-many times, we can find some $q'\leq q_1$ such that, for any $i<i^*$ and $k\in Z^i_t\cap u$,
    \[\frac{|\set{\ell\in I_k}{q'\leq r^i_\ell}|}{|I_k|}> 1- \frac{1}{|s_i|}.\]
    Then, for $i<i^*$,
    \begin{align*}
       \frac{1}{|u|}\sum_{k\in u}\frac{|\set{\ell\in I_k}{q' \leq r^i_\ell}|}{|I_k|} & > \frac{|Z^i_t \cap u|}{|u|}\left(1-\frac{1}{|s_i|}\right) > \left(1-\frac{1}{|s_i|}-\varp'\right)\left(1-\frac{1}{|s_i|}\right)\\ 
        & = 1- \frac{2}{|s_i|}-\varp' + \frac{1}{|s_i|^2}+ \frac{\varp'}{|s_i|} > 1- \frac{2}{|s_i|}-\varp'.\qedhere
 	 \end{align*}
\end{proof}

\autoref{mainEtld} follows directly by the previous theorem.

\begin{proof}[Proof of \autoref{mainEtld}]
For $s\in T_*\menos\{\la\ \ra\}$ and $\varp \in (0,1)\cap \Q$, define
\[Q_{s,\varp}:=
\left\{
\begin{array}{ll}
E'_s & \text{if $\displaystyle \frac{2}{|s|}\leq \varp$,}\\[3ex]
\{T_*|^s\} & \text{otherwise.}
\end{array}
\right.\]
It is easy to show that, for any $\varp \in (0,1)\cap \Q$,
\[\bigcup_{s\in T_*\menos\{\la\ \ra\}}Q_{s,\varp} \supseteq \bigcup\lset{E'_s}{s\in  T_*\menos\{\la\ \ra\},\ |s|\geq \frac{2}{\varp}} \text{ is dense in }\Etld.\]

By checking~\ref{char2} of \autoref{char}, 
we show that $\seq{Q_{s,\varp}}{s\in T_*\menos\{\la\ \ra\},\ \varp\in(0,1)\cap \Q}$ witnesses that $\Etld$ is $\sigma$-$\calY_*$-linked. Assume
\begin{multicols}{2} 
    \begin{itemize}
        \item $(\Xi,\bar I)\in\calY_*$,
        \item $i^*<\omega$,
        \item $(s_i,\varp_i)\in (T_*\menos\{\la\ \ra\})\times ((0,1)\cap \Q)$,
        \item $\bar r^i = \seq{r^i_\ell}{\ell\in W} \in Q_{s_i,\varp_i}^W$ for $i<i^*$,
        \item $P\in \Pbf^\Xi$,
        \item $\varp'>0$, and
        \item $q\in \Etld$ stronger than $\lim^{\Xi,\bar I} \bar r^i$ for all $i<i^*$.
    \end{itemize}
    \end{multicols}
    When $\varp_i<\frac{2}{|s_i|}$ we are dealing with the singleton $Q_{s_i,\varp_i}=\{T_*|^{s_i}\}$, for which the sequence $\bar r^i$ is constant and $\lim^{\Xi,\bar I}\bar r^i := T_*|^{s_i}$, so $q \leq T_*|^{s_i}$. Now, apply \autoref{fam-lim1} to those $i<i^*$ such that $\frac{2}{|s_i|}\leq \varp_i$, and find $q'\leq q$ in $\Etld$ and a finite non-empty $u\subseteq K$ satisfying~\ref{fm1} and~\ref{fm2} (for those $i$). Then, whenever $\frac{2}{|s_i|}\leq \varp_i$,
    \[\frac{1}{|u|}\sum_{k\in u}\frac{|\set{\ell\in I_k}{q' \leq r^i_\ell}|}{|I_k|} > 1-\frac{2}{|s_i|}-\varp' \geq 1-\varp_i -\varp'.\]
    On the other hand, whenever $\varp_i<\frac{2}{|s_i|}$,
    \[\frac{1}{|u|}\sum_{k\in u}\frac{|\set{\ell\in I_k}{q' \leq r^i_\ell}|}{|I_k|} = 1 > 1-\varp_i-\varp'.\qedhere\]
\end{proof}

\begin{remark}
In~\cite{KST}, the function $\loss\colon E'\to \Q$ is used most of the time, but it is not essential as seen above. Their definition of $\loss$, adapted to this paper, is basically $\loss(p)=\frac{2}{|\stem(p)|}$. Hence, in the proof of \autoref{mainEtld}, $Q_{s,\varp}$ refers to the set of $p\in E'$ with stem $s$ such that $\loss(p)\leq \varp$ (note that there are no such conditions when $\varp < \frac{2}{|s|}$).
\end{remark}

\begin{remark}
In~\cite{Uribelinked}, Uribe-Zapata defined the notion \emph{$\mu$-intersection-linked} for posets, where $\mu$ is an infinite cardinal, using the intersection number from Kelley~\cite{Kelley}. Strictly speaking, this property should be part of \autoref{def:famlim}~\ref{faml2} ($\mu$-$\calY$-linked) for $\seq{Q_{\alpha,\varp}}{\alpha<\mu,\ \varp\in(0,1)\cap\Q}$, so that FS iterations of such posets have fam-limits~\cite{Uribethesis,CMU}, but we excluded it for practicality. Moreover, we proved that, whenever $Q\subseteq\Por$ is $(\Xi,\bar I,\varp)$-linked, $K=\omega$ and $\lim_{k\to\infty}|I_k| = \infty$, $Q$ has intersection number ${\geq}1-\varp$~\cite{CMU}. For this reason, we obtain this condition about the intersection number for free in many cases, e,g.\ for measure algebras and $\Etld$. But note that, for the later, \autoref{psf} implies that $E'_s$ has intersection number ${\geq}1-\frac{1}{|s|}$.
\end{remark}

\section{Uf-limits on intervals}\label{sec:uflim}

Recall that an ultrafilter on a Boolean algebra can be seen as a fam taking values in $\{0,1\}$. In this sense, any ultrafilter has the uap.

We present a version of~\autoref{def:famlim} for ultrafilters,\footnote{This may not equivalent to \autoref{def:famlim} for fams with values in $\{0,1\}$, since they may not be extended to an ultrafilter (but to some fam) in the generic extension.} which is the notion we call \emph{ultrafilter-limits for intervals} in the Introduction.

\begin{definition}\label{def:uflim}
Let $\Por$ be a poset.
    \begin{enumerate}[label = \normalfont (\arabic*)]
        \item\label{ul1} Let $D$ be an ultrafilter on $\pts(K)$ for some non-empty set $K$, $\bar I = \seq{I_k}{k\in K}$ a partition of a set $W$ into finite sets, and $\varp>0$.

        A set $Q\subseteq\Por$ is \emph{$(D,\bar I,\varp)^*$-linked}\footnote{We add the $*$ to avoid confusion with \autoref{def:famlim} when $D$ is intepreted as a fam.} if there is a function $\lim\colon Q^W\to \Por$ and a $\Por$-name $\dot D'$ of an ultrafilter on $\pts(K)$ extending $D$ such that, for any $\bar p = \seq{ p_\ell}{\ell\in W} \in Q^W$,
        \begin{equation}\label{ufext}
        \lim \bar p \Vdash \lset{k\in K}{\frac{|\set{\ell \in I_k}{p_\ell \in \dot G}|}{|I_k|}\geq 1-\varp} \in \dot D'.
        \end{equation}

        \item\label{ul2} Let $\mu$ be an infinite cardinal, and let $\Dwf\subseteq\Dwf_*$, where $\Dwf_*$ is the class of all pairs $(D,\bar{I})$ such that $D$ is an ultrafilter on some $\pts(K)$ (with $K\neq \emptyset$) and $\bar{I} = \seq{I_k}{ k\in K}$ is a pairwise disjoint family of finite non-empty sets.
        
        The poset $\Por$ is \emph{$\mu$-$\Dwf$-linked}, witnessed by $\seq{Q_{\alpha,\varp}}{\alpha<\mu,\ \varp\in(0,1)\cap \Q}$, if:
        \begin{enumerate}[label = \normalfont (\roman*)]
            \item Each $Q_{\alpha,\varp}$ is $(D,\bar I,\varp)^*$-linked for any $(D,\bar I) \in \Dwf$.
            \item\label{ul2d} For $\varp\in(0,1)\cap \Q$, $\bigcup_{\alpha<\omega} Q_{\alpha,\varp}$ is dense in $\Por$.
        \end{enumerate}

        \item\label{ul3} The poset $\Por$ is \emph{uniformly $\mu$-$\Dwf$-linked} if there is some $\seq{Q_{\alpha,\varp}}{\alpha<\mu,\ \varp\in(0,1)\cap \Q}$ as above, such that in~\ref{ul1} the name $\dot D'$ only depends on $(D,\bar I)$ (and not on any $Q_{\alpha,\varp}$, although we may have different limits on each $Q_{\alpha,\varp}$).
    \end{enumerate}
    We write \emph{$\sigma$-$\Dwf$-linked} when $\mu=\aleph_0$.
\end{definition}

\begin{remark}\label{remuflim}
   In~\ref{ul1} of \autoref{def:uflim}, if $\bar I$ is composed by singletons, say $I_k = \{k\}$, then \autoref{ufext} is equivalent to
   \[\lim \bar p \Vdash \set{k\in K}{p_k\in \dot G}\in \dot D',\]
   which means that $Q$ \emph{has $D$-limits} (cf.~\cite{GMS,Mmini,nuloaditivo}).
   
   In the case that $\Dwf$ is the collection of all pairs $(D,\bar I)$ such that $D$ is an ultrafilter on $\pts(\omega)$ and $\bar I =\seq{\{k\}}{k<\omega}$, we obtain the notion of (uniform) $\mu$-uf-lim-linked as in~\cite{Mmini,nuloaditivo}, which is the notion of forcings with ultrafitler limits from~\cite{GMS}.
\end{remark}

\begin{example}\label{ex:ul}
\ 
\begin{enumerate}[label=\normalfont (\arabic*)]
\item Similar to \autoref{exm:famlk}~\ref{famsing}, all singletons are $(D,\bar I,\varp)^*$-linked for any tuple $(D,\bar I,\varp)$, and $\Por$ is uniformly $|\Por|$-$\Dwf_*$-linked. In particular, Cohen forcing is $\sigma$-$\Dwf_*$-linked.

\item From~\cite{GMS,BCM} we have that several posets associated with localization and anti-localization are uniformly $\sigma$-uf-lim-linked. However, $\Ebb$ and the localization posets are not $\sigma$-$\Dwf_*$-linked because, similar to \autoref{exm:famlk}~\ref{famlkE}, we have that $\sigma$-$\Dwf_*$-linked poset do not increase $\non(\Ewf)$. The case of anti-localization posets is not clear (likewise in the case of fam-limits).

\item Cardona and the author~\cite{nuloaditivo} presented uniformly $\sigma$-uf-lim-linked posets increasing $\non(\mathcal{MA})$ and $\add(\mathcal{SN})$, where $\mathcal{MA}$ denotes the ideal of meager-additive subsets of $2^\omega$, and $\mathcal{SN}$ is the ideal of strong measure zero subsets of $2^\omega$. Since $\non(\mathcal{MA}) \leq \non(\Ewf)$, many instances of the first poset cannot be $\sigma$-$\Dwf_*$-linked neither $\sigma$-$\calY_*$-linked.
\end{enumerate}
In contrast with~\autoref{exm:famlk}~\ref{randomfam}, it is unclear whether random forcing is $\sigma$-uf-lim-linked.
\end{example}

Similar to \autoref{char}, we can characterize uniform $\mu$-$\Dwf$-linkedness as follows.

\begin{theorem}\label{charuf}
 Let $\mu$ be a cardinal, $\Por$ a poset, $\seq{Q_{\alpha,\varp}}{\alpha<\mu,\ \varp\in(0,1)\cap\Q}$ a sequence of subsets of $\Por$, $\Dwf\subseteq\Dwf_*$ and, for each $(D,\bar I)\in \Dwf$, $\alpha<\mu$ and $\varp\in(0,1)\cap\Q$, ${\lim}^{D,\bar I}\colon Q^W_{\alpha,\varp}\to \Por$ where $W:=\bigcup_{k\in K} I_k$. Then, the following statements are equivalent.
\begin{enumerate}[label=\normalfont (\Roman*)]
\item\label{charuf1} $\Por$ is uniformly $\mu$-$\Dwf$-linked witnessed by $\seq{Q_{\alpha,\varp}}{\alpha<\mu,\ \varp\in(0,1)\cap\Q}$.
\item\label{charuf2} \autoref{def:uflim}~\ref{ul2}~\ref{ul2d} holds and, for any
    \begin{multicols}{2} 
    \begin{itemize}
        \item $(D,\bar I)\in\Dwf$,
        \item $i^*<\omega$,
        \item $(\alpha_i,\varp_i)\in \mu\times ((0,1)\cap \Q)$,
        \item $\bar r^i = \seq{r^i_\ell}{\ell\in W} \in Q_{\alpha_i,\varp_i}^W$ for $i<i^*$,
        \item $a\in D$, and
        \item $q\in \Por$ stronger than $\lim^{D,\bar I} \bar r^i$ for all $i<i^*$,
    \end{itemize}
    \end{multicols}    
    there are some $q'\leq q$ and $k\in a$ such that, for all $i<i^*$,
    \begin{equation}\label{-ul2}
       \frac{|\set{\ell\in I_k}{q' \leq r^i_\ell}|}{|I_k|} \geq 1-\varp_i.
	 \end{equation}
\end{enumerate}
\end{theorem}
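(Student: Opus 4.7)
The plan is to mirror the proof of \autoref{char} almost verbatim, replacing ``the $\Xi'$-integral is ${\geq}1-\varp$'' by ``$\set{k\in K}{\dots\geq 1-\varp}\in\dot D'$'' and, in the reverse direction, replacing the invocation of \autoref{t85} by a direct finite-intersection-property argument. No new ideas beyond those already present for the fam case are needed.

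For \ref{charuf1}${}\Rightarrow{}$\ref{charuf2}, under uniform $\mu$-$\Dwf$-linkedness each $\lim^{D,\bar I}\bar r^i$ forces the set
\[A_i:=\lset{k\in K}{\frac{|\set{\ell\in I_k}{r^i_\ell\in\dot G}|}{|I_k|}\geq 1-\varp_i}\]
into $\dot D'$, so $q$ forces $a\cap\bigcap_{i<i^*}A_i\in \dot D'$, hence non-empty. I would then pass to a generic $G\ni q$, extract a witness $k\in a$ there, go back to $V$ to find $q_0\leq q$ in $G$ deciding this $k$, and strengthen $q_0$ further to a $q'$ that, for each of the finitely many pairs $(i,\ell)\in i^*\times I_k$, satisfies $q'\leq r^i_\ell$ or $q'\perp r^i_\ell$. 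That strengthening makes $\set{\ell\in I_k}{q'\leq r^i_\ell}$ coincide with what $q'$ forces $\set{\ell\in I_k}{r^i_\ell\in\dot G}$ to be, giving~\autoref{-ul2}.

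For \ref{charuf2}${}\Rightarrow{}$\ref{charuf1}, I would fix $(D,\bar I)\in\Dwf$, let $G$ be $\Por$-generic, and in $V[G]$ define
\[\Omega:=\lset{(\alpha,\varp,\bar r)}{\alpha<\mu,\ \varp\in(0,1)\cap\Q,\ \bar r\in Q^W_{\alpha,\varp}\cap V,\ \lim^{D,\bar I}\bar r\in G}\]
and $A_{\alpha,\varp,\bar r}:=\set{k\in K}{\frac{|\set{\ell\in I_k}{r_\ell\in G}|}{|I_k|}\geq 1-\varp}$ for $(\alpha,\varp,\bar r)\in\Omega$. The goal is to extend $D\cup\set{A_{\alpha,\varp,\bar r}}{(\alpha,\varp,\bar r)\in\Omega}$ to an ultrafilter $D'=\dot D'[G]$; the only non-trivial point is verifying FIP, which follows from~\ref{charuf2} exactly as in \autoref{char}: given $a\in D$ and finitely many $(\alpha_i,\varp_i,\bar r^i)\in\Omega$, a common strengthening $q\in G$ of the $\lim^{D,\bar I}\bar r^i$ exists, and~\ref{charuf2} applied as a density below $q$ produces $(q',k)$ in $G$ placing $k$ into $a\cap\bigcap_{i<i^*}A_{\alpha_i,\varp_i,\bar r^i}$. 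The main obstacle, if any, is ensuring that $\dot D'$ depends only on $(D,\bar I)$, which is automatic because $\Omega$ aggregates all $(\alpha,\varp,\bar r)$ simultaneously; \autoref{ufext} is then immediate by construction.
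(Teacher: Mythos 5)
Your proposal is correct and takes essentially the same route as the paper's own proof: the forward direction passes to a generic extension containing $q$, extracts $k$ from the non-empty set $a\cap\bigcap_{i<i^*}A_i\in \dot D'$, and returns to $V$ to strengthen $q$ so that it decides $q'\leq r^i_\ell$ or $q'\perp r^i_\ell$ for the finitely many relevant pairs; the reverse direction defines $\dot D'$ by verifying the finite intersection property of $D$ together with the sets $A_{\alpha,\varp,\bar r}$ indexed by $\Omega$, using~\ref{charuf2} as a density argument below a common extension of the limits. No substantive differences from the paper.
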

\begin{proof}
\ref{charuf1}${}\Rightarrow{}$\ref{charuf2}: Assume~\ref{charuf1} and the assumptions of~\ref{charuf2}. Let $\dot D'$ be as in \autoref{def:uflim}~\ref{ul3}, and let $G$ be $\Por$-generic over $V$ such that $q\in G$, and $D':=\dot D'[G]$. In $V[G]$, since $q\leq \lim^{D,\bar I}\bar r^i$ for all $i<i^*$, by \autoref{ufext}
\[\lset{k\in K}{f_i(k) \geq 1-\varp_i}\in D' \text{ where }f_i(k):=\frac{|\set{\ell \in I_k}{r^i_\ell \in \dot G}|}{|I_k|}.\]
Therefore, $a\cap\bigcap_{i<i^*}\set{k\in K}{f_i(k) \geq 1-\varp_i}\in D'$, so this set is non-empty and contains some element $k$.
   
   Back in $V$, find $q'\leq q$ forcing the above and such that either $q'\leq r^i_\ell$ or $q'\perp r^i_\ell$ for all $i<i^*$ and $\ell \in I_k$. Then, $q'$ decides the value of $f_i(k)$ for all $i<i^*$, even more, $q'$ forces
   \[f_i(k) = \frac{\set{\ell\in I_k}{q'\leq r^i_\ell}}{|I_k|}.\]
   Then,~\autoref{-ul2} follows.
   
   \ref{charuf2}${}\Rightarrow{}$\ref{charuf1}: Assume~\ref{charuf2} and $(D,\bar I)\in \Dwf$. Let $G$ be $\Por$-generic over $V$ and work in $V[G]$. Consider the set
   \[\Omega := \lset{(\alpha,\varp,\bar r)}{\alpha<\mu,\ \varp\in(0,1)\cap\Q,\ \bar r\in Q^W_{\alpha,\varp}\cap V,\ {\lim}^{D,\bar I}\bar r \in G}.\]
   
   For each $\alpha<\mu$, $\varp\in(0,1)\cap \Q$ and $\bar r\in Q^W_{\alpha,\varp}\cap V$, define
   \[f_{\alpha,\varp,\bar r}(k):= \frac{\set{\ell \in I_k}{r_\ell\in \dot G}}{|I_k|},\qquad a_{\alpha,\varp,\bar r}:=\set{k\in K}{f_{\alpha,\varp,\bar r}(k)\geq 1-\varp}.\]
   To prove~\ref{charuf1}, it is enough to check that the family $D\cup \lset{a_{\alpha,\varp,\bar r}}{(\alpha,\varp,\bar r)\in\Omega}$ has the finite intersection property. Indeed, assume
   \begin{multicols}{2} 
    \begin{itemize}
        \item $i^*<\omega$,
        \item $(\alpha_i,\varp_i,\bar r^i)\in \Omega$ for $i<i^*$, and
        \item $a\in D$.
    \end{itemize}
    \end{multicols}
    Back in $V$, let $q\in \Por$ be a condition forcing the above such that, wlog, $q\leq \lim^{D,\bar I}\bar r^i$ for all $i<i^*$. Then, by~\ref{charuf2}, there exists some $q'\leq q$ in $\Por$ and some $k\in a$ satisfying \autoref{-ul2} for all $i<i^*$. This density argument allows to find such a $q'$ in $G$. Therefore, in $V[G]$, for any $i<i^*$,
    \[f_{\alpha_i,\varp_i,\bar r^i}(k) \geq \frac{|\set{\ell\in I_k}{q' \leq r^i_\ell}|}{|I_k|} \geq 1-\varp_i,\]
    so $k\in a\cap \bigcap_{i<i^*}a_{\alpha_i,\varp_i,\bar r^i}$.
\end{proof}

The purpose of this section is to prove the following.

\begin{theorem}\label{thm:Etlduf}
  Under \autoref{hyplog}, $\Etld$ is uniformly $\sigma$-$\Dwf_*$-linked.
\end{theorem}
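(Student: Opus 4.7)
The plan is to mirror the proof of \autoref{mainEtld}, replacing the fam-theoretic selection of successors in the limit construction by an ultrafilter-theoretic one. For $s \in T_* \menos \{\la\ \ra\}$ and $\varp \in (0,1) \cap \Q$, set
\[Q_{s,\varp} := \begin{cases} E'_s & \text{if $1/|s| \leq \varp$,}\\ \{T_*|^s\} & \text{otherwise;}\end{cases}\]
density of $\bigcup_{s,\varp} Q_{s,\varp}$ in $\Etld$ is proved as in the proof of \autoref{mainEtld}. It then suffices to check condition~\ref{charuf2} of~\autoref{charuf}; this reduces to an intermediate statement analogous to \autoref{fam-lim1} producing an appropriate limit $\lim^{D,\bar I}$ on $(E'_s)^W$ for each $(D,\bar I)\in\Dwf_*$.

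The construction reuses the pseudo-fusion machinery of \autoref{psf}. Given $\bar r = \seq{r_\ell}{\ell\in W} \in (E'_s)^W$, let $r^*_k$ be the pseudo-fusion of $\seq{r_\ell}{\ell\in I_k}$, and set $Z^{\bar r}_t := \set{k\in K}{t\in r^*_k}$. Construct $r^* := \lim^{D,\bar I}\bar r$ by recursion on the height maintaining the invariant $Z^{\bar r}_t \in D$ for every $t\in r^*$ above $s$ (initially $Z^{\bar r}_s = K \in D$). At a node $t \in r^*$ at level $n\geq |s|$, the finitely many possible values $\suc_{r^*_k}(t)$ with $k\in Z^{\bar r}_t$ partition $Z^{\bar r}_t$ into finitely many classes, exactly one of which, call it $Z'_t$, lies in $D$. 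Setting $\suc_{r^*}(t)$ equal to the common value $A^*$ of $\suc_{r^*_k}(t)$ on $Z'_t$ yields $Z'_t \subseteq Z^{\bar r}_{t'}$ for each $t'\in A^*$, preserving the invariant; and by \autoref{psf}~\ref{bul1}, $\|A^*\|_t \geq 1+1/|s|+2\log_{b_*(t)}|t| \geq 1+1/|s|$, so $r^*\in\Etld$ with stem $s$.

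To verify~\ref{charuf2} of \autoref{charuf}, take $(D,\bar I)\in \Dwf_*$, $i^*<\omega$, pairs $(s_i,\varp_i)$, sequences $\bar r^i \in Q^W_{s_i,\varp_i}$, some $a\in D$, and $q$ below every $\lim^{D,\bar I}\bar r^i$. The trivial indices ($\varp_i<1/|s_i|$) are handled automatically since $\bar r^i$ is the constant sequence $T_*|^{s_i}$ and $q\leq T_*|^{s_i}$, forcing every $q'\leq q$ to satisfy $|\set{\ell\in I_k}{q'\leq r^i_\ell}|/|I_k|=1$ at any $k$. For the non-trivial indices, since each $\lim^{D,\bar I}\bar r^i$ has stem $s_i$, the $s_i$'s form a chain below $\stem(q)$; using \autoref{hyplog}, pick $t\in q$ above all such $s_i$ of length large enough that $q|^t$ and every $r^{i,*}_k|^t$ (with $r^{i,*}_k$ the pseudo-fusion of $\seq{r^i_\ell}{\ell\in I_k}$) lies in $E_{t,i^*+1}$. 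The invariant gives $Z^{\bar r^i}_t\in D$ for each non-trivial $i$, so $a\cap\bigcap_i Z^{\bar r^i}_t\neq\emptyset$; fix $k$ in this intersection. By $(i^*+1)$-linkedness of $E_{t,i^*+1}$ (\autoref{klinked}~\ref{klinkeda}) there is a common lower bound $q_1\leq q$ of the conditions $r^{i,*}_k|^t$, and iterating \autoref{psf}~\ref{bul2} once per $i$ produces $q'\leq q_1$ with $|\set{\ell\in I_k}{q'\leq r^i_\ell}|/|I_k|>1-1/|s_i|\geq 1-\varp_i$ for every non-trivial $i$, as required.

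The main technical obstacle is ensuring that the ultrafilter-selected successor inherits the norm of the pseudo-fusion with no extra loss. This is automatic here because $D$ picks a single already-existing value $A^* = \suc_{r^*_k}(t)$ (for $k$ in the $D$-large set $Z'_t$), whose norm is uniformly bounded from below by \autoref{psf}~\ref{bul1}. This is strictly cleaner than the fam case, where invoking co-bigness on a probability function costs an additional $\log_{b_*(t)}n^2$ per level and doubles the error in the verification—hence the slightly sharper condition $1/|s|\leq\varp$ in the definition of $Q_{s,\varp}$ here, in place of $2/|s|\leq\varp$ in the proof of \autoref{mainEtld}.
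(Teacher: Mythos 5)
Your proof is correct and follows essentially the same route as the paper: pseudo-fusions $r^*_k$, a limit tree determined by the sets $Z^{\bar r}_t$ and the ultrafilter, and a verification via \autoref{charuf} using $(i^*+1)$-linkedness of $E_{t,i^*+1}$ together with repeated applications of \autoref{psf}~\ref{bul2}. The only cosmetic difference is that you build $\lim^{D,\bar I}\bar r$ recursively by selecting the $D$-large class of common successor sets, whereas the paper declares $t\in r^*$ iff $Z_t\in D$ and then observes the same majority fact; these yield the same tree.
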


The strategy to prove this theorem is similar to \autoref{mainEtld}.

\begin{theorem}\label{uf-lim1}
   Let $K\neq\emptyset$, $D$ an ultrafilter on $\pts(K)$ and $\bar I = \seq{I_k}{k\in K}$ a partition of a set $W$ into finite sets. Then,   
    for any $s\in T_*\menos\{\la\ \ra\}$ there is a function $\lim^{D,\bar I}\colon (E'_{s})^W\to \Etld$ such that $\lim^{D,\bar I}\bar r$ has stem $s$ for any $\bar r \in (E'_{s})^W$, and satisfying: For any
    \begin{multicols}{2}
    \begin{itemize}
		\item $i<i^*$
		\item $s_i\in T_*\menos\{\la\ \ra\}$,
		\item $\bar r^i = \seq{r^i_\ell}{\ell\in W} \in (E'_{s_i})^W$ for $i<i^*$,
        \item $a\in D$, and
        \item $q\in \Etld$ stronger than $\lim^{D,\bar I} \bar r^i$ for all $i<i^*$,
	 \end{itemize}
	 \end{multicols}
	 there are $q'\leq q$ in $\Etld$ and $k\in a$ such that, for any $i<i^*$,
	  \begin{equation}\label{lprop}
       \frac{|\set{\ell\in I_k}{q' \leq r^i_\ell}|}{|I_k|}  > 1-\frac{1}{|s_i|}.
 	 \end{equation}
\end{theorem}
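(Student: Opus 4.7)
The plan is to mimic the structure of \autoref{fam-lim1}, replacing the fam on $\pts(K)$ by the ultrafilter $D$; a simplification is that no \emph{uap} bookkeeping is needed, since $D$ is already $\{0,1\}$-valued. First I would define $\lim^{D,\bar I}\bar r$ for $\bar r\in (E'_s)^W$ as follows. For each $k\in K$ let $r^*_k$ be the pseudo-fusion of $\seq{r_\ell}{\ell\in I_k}$ given by \autoref{psf}, and set $Z_t:=\set{k\in K}{t\in r^*_k}$, so that $Z_s=K\in D$. I construct $r^*:=\lim^{D,\bar I}\bar r$ by recursion on the height so that $Z_t\in D$ for every $t\in r^*$ above $s$: given such $t$, the sets $\set{k\in Z_t}{\suc_{r^*_k}(t)=a}$, for $a\subseteq\suc_{T_*}(t)$, form a finite partition of $Z_t\in D$, so exactly one block lies in $D$; call the corresponding $a$ by $a^*(t)$ and set $\suc_{r^*}(t):=a^*(t)$. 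Since $a^*(t)=\suc_{r^*_k}(t)$ for any $k$ in that block and $r^*_k$ satisfies \autoref{psf}~\ref{bul1}, we have $\|\suc_{r^*}(t)\|_t\geq 1+\frac{1}{|s|}+2\log_{b_*(t)}|t|\geq 1+\frac{1}{|s|}$; and for $t'\in a^*(t)$ the set $\set{k\in Z_t}{\suc_{r^*_k}(t)=a^*(t)}\subseteq Z_{t'}$ belongs to $D$, closing the induction. Hence $r^*\in\Etld$ with stem $s$.

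For the main property, let $q$, $\bar r^i$ and $a\in D$ be as in the hypotheses, and write $r^i_k$, $Z^i_t$ for the objects built from $\bar r^i$. Since $q\leq r^i:=\lim^{D,\bar I}\bar r^i$ for each $i<i^*$, every node $t\in q$ above $\stem(q)$ is above each $s_i$ and therefore lies in $r^i$, so $Z^i_t\in D$. By \autoref{klinked}~\ref{klinkedb} I pick $t\in q$ with $q|^t\in E_{t,i^*+1}$; by enlarging $|t|$ further, and using \autoref{hyplog}, I may additionally arrange $|t|\geq|s_i|$ for all $i<i^*$ and $|t'|^2\geq i^*+1$ for every $t'$ above $t$. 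A direct comparison of the pseudo-fusion norm bound $1+\frac{1}{|s_i|}+2\log_{b_*(t')}|t'|$ with $1+\frac{1}{|t|}+\log_{b_*(t')}(i^*+1)$ then shows $r^i_k|^t\in E_{t,i^*+1}$ for every $k$ and every $i<i^*$. Now $a\cap\bigcap_{i<i^*}Z^i_t\in D$, so I choose $k$ in this intersection; for this $k$, $t\in r^i_k$ for every $i<i^*$, and therefore all of $q|^t$ and $\set{r^i_k|^t}{i<i^*}$ sit inside $E_{t,i^*+1}$. The $(i^*+1)$-linkedness from \autoref{klinked}~\ref{klinkeda} then yields a common lower bound $q_1\in\Etld$.

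To finish, I iterate \autoref{psf}~\ref{bul2}: since $q_1\leq r^0_k$, density provides $\tilde q_0\leq q_1$ with $\frac{|\set{\ell\in I_k}{\tilde q_0\leq r^0_\ell}|}{|I_k|}>1-\frac{1}{|s_0|}$; since $\tilde q_0\leq r^1_k$, a second application gives $\tilde q_1\leq\tilde q_0$ with the analogous inequality for $i=1$, and so on. Strengthening preserves the relation ``$q'\leq r^i_\ell$'' for the $\ell$'s already counted, so after $i^*$ steps the condition $q':=\tilde q_{i^*-1}\leq q$ satisfies \autoref{lprop} for all $i<i^*$. The main obstacle, as in \autoref{fam-lim1}, is the coordination in the middle step: arranging that $q|^t$ and every $r^i_k|^t$ simultaneously sit in a single $(i^*+1)$-linked set $E_{t,i^*+1}$; this is precisely where \autoref{hyplog} is used, to buy room in the pseudo-fusion norms as $|t|$ grows.
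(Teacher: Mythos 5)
Your proof is correct and follows essentially the same route as the paper: define $\lim^{D,\bar I}\bar r$ from the pseudo-fusions via the sets $Z_t=\set{k}{t\in r^*_k}$ and membership in $D$ (your recursive choice of the $D$-large block $a^*(t)$ yields exactly the tree $\set{t}{Z_t\in D}$), then pick $t$ deep enough, choose $k\in a\cap\bigcap_{i<i^*}Z^i_t$, take a common lower bound in $E_{t,i^*+1}$, and iterate \autoref{psf}~\ref{bul2}. If anything, you make explicit a step the paper leaves implicit, namely the norm comparison showing that the pseudo-fusions $r^i_k|^t$ themselves land in $E_{t,i^*+1}$, which is what the $(i^*+1)$-linkedness argument actually requires.
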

\begin{proof}
We proceed as in the proof of \autoref{fam-lim1}. Let $s\in T_*$. We first show how to define $\lim^{D,\bar I}\bar r$ for $\bar r = \seq{r_\ell}{\ell\in W}\subseteq (E'_{s})^W$. For each $k\in K$, let $r^*_k$ be the pseudo-fusion of $\seq{r_\ell}{\ell\in I_k}$ as in \autoref{psf}. For each $t\in T_*$ set $Z_t= Z^{\bar r}_t:= \set{k\in K}{t\in r^*_k}$. Define $r^* :=\lim^{D,\bar I}\bar r$ such that, for $t\in T_*$,
\[t\in r^* \text{ iff }Z_t\in D.\]
Since the tree $T_*$ is finitely branching, we obtain that, for any $t\in r^*$, $\set{k\in K}{\suc_{r^*}(t)= \suc_{r^*_k}(t)}\in D$. This implies that $r^*$ is a tree with stem $s$ and 
\[\|r^*\|_t\geq 1 + \frac{1}{|s|} + 2 \log_{b_*(t)}|t|\]
for any $t\in r^*$ above $s$. Hence $r^*\in \Etld$.
    
    We now prove that this limit works. Work under the assumptions of the bullet points. For $i<i^*$ and $k\in K$, let $r^i_k$ be the pseudo-fusion of $\seq{r^i_\ell}{\ell\in I_k}$, $r^i:=\lim^{D,\bar I}\bar r^i$ and $Z^i_t:= Z^{\bar r^i}_t$. So we have that $q\leq r^i$ for all $i<i^*$.
    
    Pick $t\in q$ large enough such that $q|^t$ and $r^i|^t$ are in $E_{t,i^*+1}$ for $i<i^*$ (which is fine because $t\in r^i$ for all $i<i^*$). Then $a\cap\bigcap_{i<i^*}Z^i_t \in D$, so this intersection is non-empty and we can pick some $k$ in there, i.e.\ $k\in a$ and $t\in r^i_k$ for all $i<i^*$. Hence, by \autoref{klinked}, there is a lower bound $q_1$ of $\{q\}\cup \set{r^i_k}{i<i^*}$. Apply \autoref{psf}~\ref{bul2} $i^*$-many times to find $q'\leq q_1$ such that, for $i<i^*$,
    \[
       \frac{|\set{\ell\in I_k}{q' \leq r^i_\ell}|}{|I_k|}  > 1-\frac{1}{|s_i|}.\qedhere
 	 \]
\end{proof}

\begin{proof}[Proof of \autoref{thm:Etlduf}]
For $s\in T_*\menos\{\la\ \ra\}$ and $\varp \in (0,1)\cap \Q$, define
\[Q'_{s,\varp}:=
\left\{
\begin{array}{ll}
E'_s & \text{if $\displaystyle \frac{1}{|s|}\leq \varp$,}\\[3ex]
\{T_*|^s\} & \text{otherwise.}
\end{array}
\right.\]
It is easy to show that, for any $\varp \in (0,1)\cap \Q$,
\[\bigcup_{s\in T_*\menos\{\la\ \ra\}}Q'_{s,\varp} \supseteq \bigcup\lset{E'_s}{s\in  T_*\menos\{\la\ \ra\},\ |s|\geq \frac{1}{\varp}} \text{ is dense in }\Etld.\]

By checking~\ref{charuf2} of \autoref{charuf}, 
we show that $\seq{Q'_{s,\varp}}{s\in T_*\menos\{\la\ \ra\},\ \varp\in(0,1)\cap \Q}$ witnesses that $\Etld$ is $\sigma$-$\Dwf_*$-linked. Assume
\begin{multicols}{2} 
    \begin{itemize}
        \item $(D,\bar I)\in\Dwf_*$,
        \item $i^*<\omega$,
        \item $(s_i,\varp_i)\in (T_*\menos\{\la\ \ra\})\times ((0,1)\cap \Q)$,
        \item $\bar r^i = \seq{r^i_\ell}{\ell\in W} \in Q'^W_{s_i,\varp_i}$ for $i<i^*$,
        \item $a\in D$, and
        \item $q\in \Etld$ stronger than $\lim^{D,\bar I} \bar r^i$ for all $i<i^*$.
    \end{itemize}
    \end{multicols}   
    When $\varp_i<\frac{1}{|s_i|}$ we are dealing with the singleton $Q'_{s_i,\varp_i}=\{T_*|^{s_i}\}$, for which the sequence $\bar r^i$ is constant and $\lim^{D,\bar I}\bar r^i = T_*|^{s_i}$, so $q \leq T_*|^{s_i}$. Now, apply \autoref{uf-lim1} to those $i<i^*$ such that $\frac{1}{|s_i|}\leq \varp_i$, and find $q'\leq q$ in $\Etld$ and $k\in a$ satisfying \autoref{lprop} (for those $i$). Then, whenever $\frac{1}{|s_i|}\leq \varp_i$,
    \[\frac{|\set{\ell\in I_k}{q' \leq r^i_\ell}|}{|I_k|}  > 1-\frac{1}{|s_i|}\geq 1-\varp_i.\]
    On the other hand, whenever $\varp_i<\frac{1}{|s_i|}$,
    \[\frac{|\set{\ell\in I_k}{q' \leq r^i_\ell}|}{|I_k|} = 1 > 1-\varp_i.\qedhere\]
\end{proof}

\subsection*{Acknowledgments}
This note is developed for the proceedings of the RIMS Set Theory Workshop 2023 \emph{Large Cardinals and the Continuum}, held at Kyoto University RIMS. The author thanks the organizer, Professor Hiroshi Fujita from Ehime University, for letting him participate with a talk at the Workshop and submit a paper to this proceedings.

This work is supported by the Grants-in-Aid for Scientific Research (C) 23K03198, Japan Society for the Promotion of Science

{\footnotesize
\bibliography{left}
\bibliographystyle{alpha}
}

\end{document}